\theoremstyle{plain}
\newtheorem{Theorem}{Theorem}[section]
\newtheorem{Lemma}[Theorem]{Lemma}
\newtheorem{Proposition}[Theorem]{Proposition}
\newtheorem{Question}{Question}
\theoremstyle{definition}
\newtheorem{Definition}[Theorem]{Definition}
\newtheorem{Remark}[Theorem]{Remark}
\theoremstyle{remark}
\newtheorem*{chunk*}{}
\numberwithin{equation}{Theorem}
\newcommand{\mlabel}[1]%
{\mbox{}\marginpar{\raggedleft\hspace{0pt}{\rm\ttfamily#1}}\label{#1}}
\newcommand{\depth}{\operatorname{depth}}
\newcommand{\het}{\operatorname{ht}}
\newcommand{\ext}{\operatorname{Ext}}
\newcommand{\Coker}{\operatorname{Coker}}
\newcommand{\Supp}{\operatorname{Supp}}
\newcommand{\Spec}{\operatorname{Spec}}
\newcommand{\fm}{{\mathfrak m}}
\newcommand{\fn}{{\mathfrak n}}
\newcommand{\fp}{{\mathfrak p}}
\newcommand{\fq}{{\mathfrak q}}
\newcounter{hours}\newcounter{minutes}
\newcommand{\excise}[1]{}
\begin{document}
\title[On the localization theorem for F-pure rings]
{On the localization theorem for F-pure rings}
\author[K.Shimomoto]{Kazuma Shimomoto}
\author[W.Zhang]{Wenliang Zhang}

\address{483-2 Kiyotaki Shijonawate-shi, Osaka 575-0063, Japan}
\email{shimomotokazuma@gmail.com}

\address{Department of Mathematics, University of Michigan, Ann Arbor, MI 48109, USA}
\email{wlzhang@umich.edu}

\thanks{2000 {\em Mathematics Subject Classification\/}: 13A35,  13B10}

\keywords{Localization~problem;~Radu-Andr$\mathrm{\grave{e}}$ morphism;~Tight closure.}

\maketitle

\begin{abstract}
In this article, we solve Grothendieck's localization problem for a certain class of rings that arises from tight closure theory. It is quite essential to use the relative version of the Frobenius map (Radu-Andr$\mathrm{\grave{e}}$ morphism) to investigate the fibers of a flat ring homomorphism of characteristic $p > 0$. We also give some interesting geometric applications of the localization problem.
\end{abstract}

\bigskip

\section{Introduction}

Let  $R$ be a discrete valuation ring and let $\varphi:R \to S$ be a flat ring homomorphism. Then one of the interesting questions in this situation is to study how the generic fiber of $\varphi$ degenerates to the closed fiber. In general, even if one starts with a smooth generic fiber, the closed fiber can absorb bad singularities. The localization problem, roughly speaking, describes the effect of the closed fiber to the general fibers under a flat local map of local rings.

For a homomorphism $\varphi:R \to S$ of noetherian rings, we denote by $k(\fp)$ the residue class field for $\fp \in \Spec R$. Then the fiber ring of the map $\varphi$ at $\fp$ is defined to be $S \otimes_{R} k(\fp)$. Let $R$ be a local ring with its completion $\widehat{R}$. Then the formal fiber of $R$ at $\fp \in \Spec R$ is the fiber of the completion map $R \to \widehat{R}$ at $\fp$. With a bit ambiguity, the formal fiber rings of $R$ (not necessarily a local ring) at $\fp$ are the set of all fiber rings of the completion map $R_{\fp} \to \widehat{R_{\fp}}$, where $\widehat{R_{\fp}}$ is the $\fp$-adic completion of $R_{\fp}$. Let $\mathcal{P}$ be some ring theoretic property defined for commutative rings. Then Grothendieck~\cite[7.5.4]{Gro} posed the following question:

\begin{Question}[Grothendieck's Localization Problem]
Let $\varphi:R \to S$ be a flat local homomorphism of noetherian rings such that the closed fiber ring of $\varphi$ and all the formal fiber rings of $R$ with respect to all $\fp \in \Spec R$ satisfy $\mathcal{P}$. Then does every fiber ring of $\varphi$ satisfy $\mathcal{P}$?
\end{Question}

The localization problem has been solved in many interesting cases and the most comprehensive work was made by Avramov and Foxby (\cite{AF1},~\cite{AF2} for more results), where they established the localization problem for many classes of rings by introducing various kinds of numerical invariants, known as defects. Especially, their techniques allow them to solve the problem in the case of Cohen-Macaulay, Gorenstein, and complete intersection rings.

The main focus in this paper is on the class of rings that arise from tight closure theory. We give a quick review for tight closure theory in the next section. For details of the story, we refer the reader to the monograph~\cite{Hu}. Let us denote by $\mathcal{P}$ a property of noetherian rings that is related with tight closure theory, such as being $F$-pure, $F$-injective, and so on. Let $R$ be an algebra over a field $K$. Then we say that $R$ has $\textit{geometrically}$ $\mathcal{P}$ if $R \otimes_{K} L$ has $\mathcal{P}$ for every finite field extension $L$ of $K$.

\begin{Question}
\label{ourquestion}
Let $\varphi:R \to S$ be a flat local map of $F$-finite rings. Assume that the closed fiber ring of $\varphi$ has $($geometrically$)$ $\mathcal{P}$. Then does every fiber ring of $\varphi$ have also $($geometrically$)$ $\mathcal{P}$?
\end{Question}
In Question \ref{ourquestion}, no conditions on formal fibers of $R$ are imposed, since every $F$-finite ring is excellent (cf. \cite[2.5]{Ku}) and all formal fibers of an excellent ring are geometrically regular.

In recent years, some strong evidence has been found that shows that tight closure theory tends to behave naturally for $F$-finite rings (or more generally, excellent rings). For this reason, we limit ourselves to $F$-finite rings.

In this paper, we solve Grothendieck's localization problem for ``$\mathcal{P}$=maximal Cohen-Macaulay'' (Proposition \ref{localization-mcm}) and ``$\mathcal{P}$=geometrically $F$-pure" (Theorem \ref{localization}) and establish some geometric consequences of our results on maximal Cohen-Macaulayness (Proposition \ref{generic-mcm}) and geometric $F$-purity (Theorem \ref{generic-f-pure}).

\section{Preliminaries and notations}

Throughout, all rings are commutative noetherian of characteristic $p>0$ unless otherwise said so. We summarize some definitions and notations used in tight closure theory. Let $R^{o}$ be the complement of all minimal primes of $R$. The $\textit{tight}$ $\textit{closure}$ $I^{*}$ of an ideal $I$ of $R$ is the set of all elements $z \in R$ such that $c z^{p^{e}} \in I^{[p^{e}]}$ for $e \gg 0$ and some $c \in R^{o}$, where $I^{[p^{e}]}$ is the ideal generated by the $q=p^{e}$-th powers of an ideal $I$. $I$ is called $\textit{tightly}$ $\textit{closed}$ if $I^{*}=I$. Assume $R$ is a domain. Define $R^{+}$ to be the integral closure of $R$ in an algebraic closure of the field of fractions of $R$. Then K. Smith~\cite{Sm} proves that $I^{*}=I R^{+} \cap R$ for every parameter ideal $I$ of an excellent local domain $R$. However, in a recent breakthrough, \cite{BM}, it is proven that this equality does not hold for all ideals.

Next, assume that $R$ is reduced. Let $R^{1/q}$ be the ring obtained by adjoining all $q$-th roots of elements of $R$. Denote by $R^{\infty}$ the directed union of the tower:
$$
R \hookrightarrow R^{1/p} \hookrightarrow \cdots \hookrightarrow R^{1/p^{e}} \hookrightarrow \cdots.
$$
The $\textit{Frobenius}$ $\textit{closure}$ of an ideal $I$ of $R$ is defined to be $I^{F}:=I R^{\infty} \cap R$. $R$ is called $F$-$\textit{pure}$ (resp. $F$-$\textit{injective}$) if every ideal $I$ of $R$ (resp. for all parameter ideals $I$ of $R$; an ideal generated by $\het(I)$ number of generators) satisfies $I^{F}=I$.

Recall that a noetherian ring $R$ is $F$-$\textit{finite}$ if the Frobenius morphism $R\to R,r \mapsto r^{p} $ is finite.  By Kunz~\cite[2.5]{Ku}, every $F$-finite ring is excellent. Therefore, the formal fiber rings of an $F$-finite ring are geometrically regular.

Let $R$ be a reduced ring and let $q=p^{e}$. Note that there is an isomorphism $R^{1/q} \simeq {^eR}$ via the Frobenius map, where $^eR$ carries its usual right $R$-module structure, while its left $R$-module is via the $e$-fold iterates of the Frobenius map.

\section{Localization problem for F-pure rings}

In this section, we solve a ``slightly generalized" Grothendieck's localization problem for ``$\mathcal{P}$=maximal Cohen-Macaulay" (Proposition \ref{localization-mcm}) and then establish Theorem \ref{localization}, one of our main theorems in this paper.

To establish these results mentioned above, let us start with the definition of Radu-Andr$\mathrm{\grave{e}}$ morphism and state some known results related to it. In particular, it can be used to prove certain properties along the fibers of flat maps of noetherian rings, where cohomological arguments do not work.

\begin{Definition}
Let $\varphi:R \to S$ be a homomorphism of noetherian rings of characteristic $p>0$. Then the $Radu$-$Andr\grave{e}$ $morphism$ $w^{e}_{S/R}:S \otimes_{R} {^eR} \to {^eS}$ is the ring homomorphism defined by
$$
w^{e}_{S/R}(s \otimes r)=s^{p^{e}}\varphi(r).
$$
The commutative ring $W^{e}_{S/R}:=S \otimes_{R} {^eR}$ is called the $e$-$th$ $Radu$-$Andr\grave{e}$ $ring$. When $e=1$, we simply write $w_{S/R}$.
\end{Definition}

In general, the Radu-Andr$\mathrm{\grave{e}}$ ring is not noetherian (see~\cite{E} for other results concerning this ring). As we are mostly concerned with $F$-finite rings, we may harmlessly assume that it is noetherian, and the map $w^{e}_{S/R}$ is a finite map for any $e > 0$. Notice that any ring homomorphism $R \to S$ is identified with the natural factorization:
$$
\begin{CD}
{^eR} @>>> S \otimes_{R} {^eR} @> w^{e}_{S/R} >> {^eS}. \\
\end{CD}
$$
It is also straightforward that the composite map:
$$
\begin{CD}
S @>>> S \otimes_{R} {^eR} @> w^{e}_{S/R} >> {^eS} \\
\end{CD}
$$
is the Frobenius morphism on $S$.

Recall that a flat homomorphism of noetherian rings is $\textit{regular}$ if all fibers are geometrically regular. In connection with this map, the following theorem, originally proved by Radu \cite{Ra} and Andr$\mathrm{\grave{e}}$ \cite{An} and summarized as \cite[Theorem 2]{Du}, is the most remarkable.

\begin{Theorem}
[\cite{Du}, Theorem 2]
Let $\varphi:R \to S$ be a homomorphism of noetherian rings of characteristic $p>0$. Then it is regular if and only if the map $w_{S/R}$ is flat.
\end{Theorem}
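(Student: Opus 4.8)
The plan is to prove both implications after reducing to the local case, and then to exploit two classical inputs: \emph{Kunz's theorem}, that a Noetherian local ring of characteristic $p$ is regular if and only if its Frobenius endomorphism is flat; and the fact that a Noetherian algebra $T$ over a field $K$ of characteristic $p$ is geometrically regular over $K$ precisely when $T\otimes_K K^{1/p}$ is regular (which is all one needs, since in the relevant situation $[K:K^p]<\infty$). The reduction itself is routine: flatness of $w_{S/R}$ can be checked at the primes of ${}^{1}S$, hence --- since $\Spec({}^{1}S)\to\Spec S$ is a homeomorphism --- at the primes $\fq$ of $S$, and localizing ${}^{1}S$ there while localizing $R$ at $\fp=\varphi^{-1}(\fq)$ is compatible with the formation of $W_{S/R}$; regularity of $\varphi$ is equally local on $S$. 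So one may assume $\varphi\colon(R,\fm,k)\to(S,\fn,\ell)$ is local. (That treating $w^{1}_{S/R}$ suffices, rather than all $w^{e}_{S/R}$, follows formally because $w^{e}_{S/R}$ factors as a composite of base changes of Frobenius maps; I would not dwell on this.)

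For the implication ``$\varphi$ regular $\Rightarrow$ $w_{S/R}$ flat'' I would invoke N\'eron--Popescu desingularization to write $S=\varinjlim_i S_i$ as a filtered colimit of smooth $R$-algebras. Since $T\mapsto W_{T/R}=T\otimes_R {}^{1}R$ and $T\mapsto {}^{1}T$ both commute with filtered colimits and $w$ is natural, one gets ${}^{1}S=\varinjlim_i {}^{1}S_i$ as a module over $W_{S/R}=\varinjlim_i W_{S_i/R}$, and a filtered colimit of flat modules is flat; so it is enough to handle $\varphi$ smooth. For $\varphi$ \'etale the relative Frobenius $w_{S/R}$ is an isomorphism (equivalently, the Frobenius of $S$ is the base change along $\varphi$ of the Frobenius of $R$); for a localization $S=S_f$ it is again an isomorphism; and for $S=R[x_1,\dots,x_n]$ one checks directly that ${}^{1}S$ is finite free over $W_{S/R}$. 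Since $w$ is compatible with composition and every smooth $R$-algebra is, locally, a localization of something \'etale over a polynomial $R$-algebra, this settles the smooth case and hence the implication.

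For the converse, assume $w_{S/R}$ is flat and deduce that $\varphi$ is regular. The first task is to extract flatness of $\varphi$: the composite $R\to {}^{1}R\to W_{S/R}\xrightarrow{\,w_{S/R}\,}{}^{1}S$ equals $R\xrightarrow{\,\varphi\,}S\to {}^{1}S$, so flatness of ${}^{1}S$ over $W_{S/R}$ carries information about $\varphi$, which one converts into flatness of $S$ over $R$ via the local criterion of flatness together with an Artin--Rees argument (needed because the Frobenius of $R$ is not itself flat). Granting this, one passes to the closed fibre: flatness of $w_{S/R}$ forces flatness of $w_{\bar S/k}$, where $\bar S=S\otimes_R k$, which reduces the geometric-regularity assertion to the case $R=K$ a field. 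There $w_{\bar S/K}\colon\bar S\otimes_K K^{1/p}\to {}^{1}\bar S$, and a standard identification shows its flatness is equivalent to flatness of the Frobenius endomorphism of the Noetherian ring $\bar S\otimes_K K^{1/p}$; by Kunz this means $\bar S\otimes_K K^{1/p}$ is regular, i.e.\ $\bar S$ is geometrically regular over $K$. Finally one upgrades from the closed fibre to all fibres by re-running the reduction (or by a descending induction on the fibres of $\varphi$).

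The step I expect to be the main obstacle is the passage to flatness inside the converse: both the extraction of flatness of $\varphi$ from flatness of $w_{S/R}$, and the compatibility of the reduction to fibres with flatness. Since the structure maps $R\to k(\fp)$ are nowhere near flat, one cannot simply tensor; instead one must route the vanishing of the relevant $\tor$ modules back through the Frobenius factorization of $\varphi$ and the local flatness criterion, and this bookkeeping is the technical heart of the Radu--Andr\'e argument.
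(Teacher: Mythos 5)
The paper does not prove this statement; it is imported wholesale from Dumitrescu's paper \cite{Du} (who in turn attributes it to Radu \cite{Ra} and Andr\`e \cite{An}), so there is no in-paper argument against which to compare yours. Judged on its own terms, the proposal has a genuine gap, and a secondary concern worth flagging.

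The genuine gap is the one you yourself single out: in the converse direction, you must extract flatness of $\varphi\colon R\to S$ from flatness of $w_{S/R}$, and you do not actually do it --- you remark that it should follow ``via the local criterion of flatness together with an Artin--Rees argument'' and observe that ``this bookkeeping is the technical heart of the Radu--Andr\'e argument.'' That is precisely the point: that step \emph{is} the theorem, and gesturing at the local criterion does not constitute a proof. The difficulty is real: the factorization $R\xrightarrow{F_R}{}^1R\to W_{S/R}\xrightarrow{w_{S/R}}{}^1S$ has a flat last arrow, but the composite $R\to W_{S/R}$ is not flat (it factors through the non-flat $F_R$), so no naive d\'evissage of $\tor$'s along this chain gives $\operatorname{Tor}_1^R(k,S)=0$. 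One needs an actual argument (Radu's original one runs an induction on Frobenius powers exploiting that $w^e_{S/R}$ is flat for all $e$, combined with the Artin--Rees lemma to control $\fm^n S\cap\fm^{[p^e]}S$), and none of that is present here. Without it, the converse direction is not established.

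Two smaller points. First, the reduction to Kunz at the end passes to $\bar S\otimes_K K^{1/p}$ and asserts this ring is Noetherian; that is fine when $[K:K^p]<\infty$, but the theorem as stated makes no $F$-finiteness hypothesis, and the residue field of a Noetherian local ring need not be $F$-finite. One must either re-route through finite purely inseparable subextensions of $K^{1/p}$ and take a limit, or invoke the formal-smoothness characterization of geometric regularity directly; as written the step is not justified in the stated generality. Second, using N\'eron--Popescu desingularization for the forward implication is not wrong, but it inverts the historical logic --- Radu's and Andr\'e's relative Frobenius was introduced in large part to give a characteristic-$p$ route \emph{to} General N\'eron Desingularization --- so one should be careful to cite a Popescu proof that does not itself depend on the $w_{S/R}$ criterion; and the colimit step requires the (true, but not completely trivial) fact that a filtered colimit of flat ring maps $A_i\to B_i$ yields a flat map $\varinjlim A_i\to\varinjlim B_i$, which you state without proof. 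Given the weight of what is being assumed there, a direct argument for the forward direction (smoothness of $w_{S/R}$ on the fibers plus the local flatness criterion) would be more in the spirit of the result.
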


A ring homomorphism $\varphi:R \to S$ is said to be $reduced$ (resp. $normal$) if $\varphi$ is flat and the fiber ring of $\varphi$ at every $\fp \in \Spec R$ is geometrically reduced (resp. geometrically normal) over $k(\fp)$.

\begin{Theorem}
[\cite{Du}, Theorem 3]
\label{Dumi}
Let $\varphi:R \to S$ be a homomorphism of noetherian rings. Then the following are equivalent
\begin{enumerate}
\item[$\mathrm{(1)}$]
$\varphi$ is reduced;

\item[$\mathrm{(2)}$]
$\varphi$ is flat, $w_{S/R}$ is injective, and $R[S^{p}]$ is a pure $R$-submodule of $S$;

\item[$\mathrm{(3)}$]
$w_{S/R}$ is injective and $S/R[S^{p}]$ is $R$-flat.
\end{enumerate}
\end{Theorem}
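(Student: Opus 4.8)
The plan is to establish the cyclic implications $(1)\Rightarrow(2)\Rightarrow(3)\Rightarrow(1)$, resting on two facts. The first is the field case, essentially Radu's lemma \cite{Ra}: for a field $K$ of characteristic $p$ and a $K$-algebra $A$, one has that $A$ is geometrically reduced over $K$ if and only if $w_{A/K}$ is injective; this is seen by passing to $A\otimes_K K^{1/p}$ (geometric reducedness of $A$ over $K$ being equivalent to reducedness of $A\otimes_K K^{1/p}$) and matching $\ker w_{A/K}$ with the nilpotents there. The second is a flatness observation: viewed as an $R$-module through the structure map $r\mapsto 1\otimes r$, the ring $W^1_{S/R}=S\otimes_R{}^1R$ is $R$-flat whenever $\varphi$ is flat, because $W^1_{S/R}\otimes_R(-)$ is the composite of restriction of scalars along the Frobenius of $R$, which is exact, with the exact functor $S\otimes_R(-)$. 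For that same $R$-structure $w_{S/R}$ is $R$-linear with image the subring $R[S^p]\subseteq S$, and for every $\fp\in\Spec R$ there are canonical isomorphisms $W^1_{S/R}\otimes_R k(\fp)\cong W^1_{S_\fp/k(\fp)}$ and $w_{S/R}\otimes_R k(\fp)\cong w_{S_\fp/k(\fp)}$, writing $S_\fp:=S\otimes_R k(\fp)$.

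The implication $(2)\Rightarrow(3)$ is formal: injectivity of $w_{S/R}$ is common to both, while purity of $R[S^p]$ in the $R$-flat module $S$ together with the long exact $\operatorname{Tor}$ sequence of $0\to R[S^p]\to S\to S/R[S^p]\to 0$ gives $\operatorname{Tor}_1^R(M,S/R[S^p])=0$ for all $M$, i.e. $S/R[S^p]$ is $R$-flat. The other two implications share one device: one works with the exact sequences $0\to\ker w_{S/R}\to W^1_{S/R}\to R[S^p]\to 0$ and $0\to R[S^p]\to S\to S/R[S^p]\to 0$, applies the local criterion for flatness ($N$ is $R$-flat iff $\operatorname{Tor}_1^R(N,k(\fp))=0$ for all $\fp$), and reduces modulo each $\fp$ so as to invoke the field case on the fibers $S_\fp$.

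For $(3)\Rightarrow(1)$: since $w_{S/R}$ is injective, $R[S^p]\cong W^1_{S/R}$ is $R$-flat by the flatness observation, and since $S/R[S^p]$ is $R$-flat by hypothesis the second sequence shows $S$ is $R$-flat, so $\varphi$ is flat; tensoring that sequence with $k(\fp)$ then yields $W^1_{S_\fp/k(\fp)}=R[S^p]\otimes_R k(\fp)\hookrightarrow S_\fp$, i.e. $w_{S_\fp/k(\fp)}$ is injective, so $S_\fp$ is geometrically reduced for every $\fp$ and $\varphi$ is reduced. For $(1)\Rightarrow(2)$: $\varphi$ is flat by definition, and since each $S_\fp$ is geometrically reduced each $w_{S_\fp/k(\fp)}$ is injective; as $\operatorname{im}(w_{S_\fp/k(\fp)})=k(\fp)[(S_\fp)^p]$ is also the image of $R[S^p]\otimes_R k(\fp)\to S_\fp$, comparing the natural surjections $W^1_{S/R}\otimes_R k(\fp)\twoheadrightarrow R[S^p]\otimes_R k(\fp)\twoheadrightarrow k(\fp)[(S_\fp)^p]$ with their bijective composite forces both to be isomorphisms, so $R[S^p]\otimes_R k(\fp)\hookrightarrow S_\fp$ for every $\fp$; by the local criterion $S/R[S^p]$ is $R$-flat, hence $R[S^p]$ is a pure $R$-submodule of $S$ (a submodule with flat quotient is always pure). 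Finally $S$ and $S/R[S^p]$ being $R$-flat force $R[S^p]$ to be $R$-flat, whence $\ker w_{S/R}$ is $R$-flat; tensoring with $k(\fp)$ and using that $W^1_{S/R}\otimes_R k(\fp)\to R[S^p]\otimes_R k(\fp)$ is an isomorphism gives $\ker w_{S/R}\otimes_R k(\fp)=0$ for all $\fp$, and therefore $\ker w_{S/R}=0$.

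The main obstacle is the field case: it carries the actual geometry, and I would invoke Radu's theorem rather than reprove it. A secondary technical point is the last step — deducing $\ker w_{S/R}=0$ from the vanishing of all its fiber tensors — which is immediate in the $F$-finite setting of the present paper ($W^1_{S/R}$ is noetherian and $\ker w_{S/R}$ finitely generated, so one localizes and applies Nakayama) but calls for a more careful argument, as in Dumitrescu's original treatment, over an arbitrary noetherian ring. Throughout one must also stay alert to which $R$-module structure on $W^1_{S/R}$ is in play: the structure through $R\to S\to W^1_{S/R}$ and the one through $r\mapsto 1\otimes r$ genuinely differ, and only the latter is simultaneously $R$-flat and compatible with the statement.
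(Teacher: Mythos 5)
The paper does not prove this theorem at all: it is quoted verbatim as an external result, namely \cite[Theorem 3]{Du} (Dumitrescu), and is used as a black box. So there is no proof in the paper to compare your attempt against, and your proposal should be judged on its own terms against Dumitrescu's result.

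Your implications $(2)\Rightarrow(3)$ and $(1)\Rightarrow(2)$ are sound (modulo the finiteness caveats you yourself flag, which are harmless for $F$-finite rings). However, $(3)\Rightarrow(1)$ as written is circular. You open it with ``since $w_{S/R}$ is injective, $R[S^p]\cong W^1_{S/R}$ is $R$-flat by the flatness observation,'' and then use that flatness to deduce from $0\to R[S^p]\to S\to S/R[S^p]\to 0$ that $S$ is $R$-flat. But your flatness observation is the statement that $W^1_{S/R}$ (via $r\mapsto 1\otimes r$) is $R$-flat \emph{whenever $\varphi$ is flat}: it is precisely the assertion that base change of $S$ along the Frobenius $F_R\colon R\to R$ preserves flatness, and so it presupposes the very conclusion ($\varphi$ flat) that the rest of the sentence is supposed to establish. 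Nothing in hypothesis $(3)$ gives you flatness of $W^1_{S/R}$ directly; injectivity of $w_{S/R}$ only identifies $R[S^p]$ with $W^1_{S/R}$ as $R$-modules, and flatness of $S/R[S^p]$ alone does not bootstrap to flatness of either subobject. Some genuinely new input is needed to get $\varphi$ flat from $(3)$ before the rest of your argument (reduction modulo $k(\fp)$ to Radu's field-level criterion) can be run; as it stands the step is a gap, not a technicality.
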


In the proof of the main theorem, we shall use the following localization theorems, both of which hold in arbitrary characteristic.

\begin{Theorem}[\cite{Ni}, 2.4]
\label{Ni}
Let $\varphi:(R,\fm,k_{R}) \to (S,\fn,k_{S})$ be a local map of noetherian rings such that
\begin{enumerate}
\item[$\mathrm{(1)}$]
the formal fibers of $R$ are reduced $($resp. normal$)$,

\item[$\mathrm{(2)}$]
$S/\fm S$ is geometrically reduced $($resp. geometrically normal$)$ over $k_{R}$, and

\item[$\mathrm{(3)}$]
$\varphi$ is flat.
\end{enumerate}
Then the map $\varphi$ is reduced $($resp. normal$)$.

\end{Theorem}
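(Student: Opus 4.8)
The statement is \cite{Ni}, Theorem~2.4; here I sketch the approach I would take to prove it. The goal is to show that every fiber $S\otimes_{R}k(\fp)$ is geometrically reduced (resp.\ normal) over $k(\fp)$. Since both properties are local on the spectrum and descend along faithfully flat maps, it suffices to control the localizations of each fiber ring, and the hypothesis on the closed fiber should be propagated outward by a noetherian induction on $\dim(R/\fp)$: the base case $\fp=\fm$ is precisely hypothesis $(2)$. For the inductive step, replace $R\to S$ by $A:=R/\fp\to B:=S/\fp S$. Then $A\to B$ is flat, $A$ is a local domain whose formal fibers are reduced (resp.\ normal) because they are literally among the formal fibers of $R$, the fiber of $\varphi$ at $\fp$ is the generic fiber $B\otimes_{A}\operatorname{Frac}(A)$, and the fibers of $A\to B$ over the nonzero primes of $A$ are exactly the fibers of $\varphi$ over the primes strictly containing $\fp$, hence geometrically reduced (resp.\ normal) by the induction hypothesis. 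So the whole theorem reduces to the following one-step statement: \emph{if $A\to B$ is flat with $A$ a local domain having reduced (resp.\ normal) formal fibers, and if every fiber of $A\to B$ over a nonzero prime of $A$ is geometrically reduced (resp.\ normal), then so is the generic fiber.}

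To prove the one-step statement I would first descend along the faithfully flat map $A\to\widehat{A}$. Hypothesis $(1)$ makes $A\to\widehat{A}$ a ring map carrying the relevant fiber property, and since $A$ is a domain one checks, by computing $\Ass\widehat{A}$ from flatness and $\Ass A=\{(0)\}$ together with reducedness (resp.\ normality) of the formal fiber at $(0)$, that $\widehat{A}$ itself is reduced (resp.\ normal). Using the standard composition and descent lemmas for reduced (resp.\ normal) ring homomorphisms, it then suffices to prove the one-step statement with $A$ replaced by a complete local ring, where the formal fibers are trivial; treating the minimal primes separately, one may take this base to be a complete local domain. A step-by-step reduction based on Cohen's structure theorem—presenting the complete base as module-finite over a complete regular local subring and then cutting by sufficiently general elements that are regular on $B$—should bring the problem to the one-dimensional case: $V\to C$ flat with $V$ a complete discrete valuation ring of uniformizer $\pi$ and $C/\pi C$ geometrically reduced (resp.\ normal), and the goal is that $C[\pi^{-1}]=C\otimes_{V}\operatorname{Frac}(V)$ is geometrically reduced (resp.\ normal). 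There one argues directly: $C$ is torsion free over $V$, so $\pi$ is a nonzerodivisor, and $C$ is $\pi$-adically separated, so reducedness of $C/\pi C$ spreads to $C$ and hence to $C[\pi^{-1}]$; the separability over $\operatorname{Frac}(V)$ of the residue fields at the minimal primes of $C[\pi^{-1}]$, and in the normal case Serre's $(R_{1})+(S_{2})$, are extracted from the corresponding data on $C/\pi C$ using that depth transfers across the flat map $V\to C$.

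The crux lies in the phrase ``descend along $A\to\widehat{A}$''. Because $B$ (equivalently $S$) need not be a finitely generated $A$-algebra, the ring $\widehat{A}\otimes_{A}B$ is in general non-noetherian and is \emph{not} the completion $\widehat{B}$, so one cannot simply base change to $\widehat{A}$ and remain in the noetherian category. The real technical work—and the reason the theorem is not formal—is the passage between $\widehat{A}\otimes_{A}B$ and the noetherian ring $\widehat{B}$: reconciling the $\fm\widehat{B}$-adic and $\fn$-adic data and verifying that no geometric reducedness or normality is lost along the way. A second, milder subtlety, visible only in the ``geometric'' half of the statement, is that separability of residue fields over an imperfect field $k(\fp)$ is not automatically preserved by these manoeuvres and must be tracked by a linear-disjointness argument (against $k(\fp)^{1/p^{\infty}}$) running in parallel with the reducedness argument. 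In characteristic $p$ both difficulties can be mitigated by invoking the Radu--Andre criterion of Theorem~\ref{Dumi}: reducedness of $\varphi$ is equivalent to injectivity of $w_{S/R}$ together with $R$-flatness of $S/R[S^{p}]$, conditions that one can test after the faithfully flat base change $R\to\widehat{R}$, reducing once more to the complete-base situation isolated above.
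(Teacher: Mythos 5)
This statement is not proved in the paper at all: it is quoted from Nishimura \cite{Ni} (Theorem 2.4), so the only question is whether your sketch amounts to a proof on its own, and it does not. Your opening reduction is fine: the noetherian induction on $\dim(R/\fp)$, replacing $R\to S$ by $R/\fp\to S/\fp S$ and noting that the formal fibers of $R/\fp$ are among those of $R$, correctly reduces the theorem to your ``one-step statement'' about the generic fiber over a local domain $A$. But everything after that is a plan rather than an argument, and the steps you leave open are exactly the content of Nishimura's theorem. (i) The passage to a complete base is not carried out: if you replace $A\to B$ by $\widehat{A}\to\widehat{B}$, the closed fiber of the new map is the completion of $B/\fm B$, and to know that \emph{it} is still geometrically reduced (resp.\ normal) over the residue field you need control of the formal fibers of the closed fiber ring (equivalently of $S$), which is not among the hypotheses; the alternative base change $\widehat{A}\otimes_{A}B$ is, as you yourself note, non-noetherian and not $\widehat{B}$. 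You explicitly label this ``the real technical work'' and do not resolve it. (ii) The reduction from a complete local domain to a complete DVR ``by cutting by sufficiently general elements'' is unjustified: passing to $A/(f)\to B/fB$ changes the generic point of the base, so the fiber you actually need, $B\otimes_{A}\operatorname{Frac}(A)$, is not a fiber of the cut-down map, and you never explain how its geometric reducedness or normality is recovered from the lower-dimensional situation. (iii) The ``geometric'' half --- behaviour under finite purely inseparable extensions of $\kappa(\fp)$ --- is only flagged as requiring a linear-disjointness argument and is not performed; and the proposed fallback via Theorem \ref{Dumi} is both restricted to characteristic $p$ (the theorem, as the paper notes, is characteristic-free) and circular in spirit, since verifying the Radu--Andr\`e criterion after base change to $\widehat{R}$ runs into the same completion problem as in (i).

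There is also a concrete error in the descent step: from normality of the formal fibers of a local \emph{domain} $A$ you cannot conclude that $\widehat{A}$ is normal --- by faithfully flat descent that would force $A$ itself to be normal, and $A=R/\fp$ need not be (any non-normal excellent local domain is a counterexample). Only the reduced half of that claim is correct, via the computation of $\Ass\widehat{A}$ that you indicate. In summary, the skeleton (induction to the one-step statement, the DVR-level arguments with $\pi$ a nonzerodivisor and $\pi$-adic separatedness) is sound, but the central transfers --- across completion of the base, down to dimension one, and across purely inseparable residue field extensions --- are precisely what Nishimura's proof supplies and what your proposal defers, so it cannot be accepted as a proof of the theorem.
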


\begin{Theorem}[\cite{AF1}, Theorem 4.1]
\label{af}
Let $\varphi:(R,\fm) \to (S,\fn)$ be a flat local map of noetherian rings. Assume that the formal fibers of $R$ and the closed fiber of $\varphi$ have one of the following properties:

\begin{enumerate}
\item[$\mathrm{(CI)}$]
complete intersection.

\item[$\mathrm{(G)}$]
Gorenstein.

\item[$\mathrm{(CM)}$]
Cohen-Macaulay.
\end{enumerate}
Then all the fibers of $\varphi$ and the formal fibers of $S$ have the corresponding property.
\end{Theorem}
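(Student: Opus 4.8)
The plan is to run the defect-theoretic machinery of Avramov and Foxby, treating the three cases in parallel. To each property $\mathcal P\in\{(\mathrm{CI}),(\mathrm G),(\mathrm{CM})\}$ one attaches a numerical invariant $\delta_{\mathcal P}(A)\ge 0$ of a noetherian local ring $(A,\fm)$ — the complete intersection defect, the Gorenstein defect (read off the Bass series), and the Cohen--Macaulay defect $\operatorname{cmd}A=\dim A-\depth A$, respectively — enjoying the following properties, each of which is classical:
\begin{enumerate}
\item[$\mathrm{(i)}$] $A$ has $\mathcal P$ if and only if $\delta_{\mathcal P}(A)=0$;
\item[$\mathrm{(ii)}$] $\delta_{\mathcal P}(A)=\delta_{\mathcal P}(\widehat A)$;
\item[$\mathrm{(iii)}$] if $A$ has $\mathcal P$ then so does $A_{\fp}$ for every $\fp\in\Spec A$;
\item[$\mathrm{(iv)}$] $\delta_{\mathcal P}$ is additive along flat local homomorphisms: for a flat local map $(A,\fm)\to(B,\fn)$ with closed fibre $C=B\otimes_AA/\fm$ one has $\delta_{\mathcal P}(B)=\delta_{\mathcal P}(A)+\delta_{\mathcal P}(C)$.
\end{enumerate}
For $\operatorname{cmd}$ these are standard; for the other two defects they follow from the behaviour of Bass numbers, respectively of the deviations and André--Quillen homology, under flat local base change and completion. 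What must be shown is that $\delta_{\mathcal P}$ vanishes on every fibre $S\otimes_Rk(\fp)$ localized at any of its primes, and on every formal fibre of $S$.

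The whole theorem is then reduced to a single semicontinuity statement: $(\ast)$ \emph{if $\psi\colon(A,\fm)\to B$ is flat local with all base formal fibres having $\mathcal P$, then $Z=\{\fp\in\Spec A:\delta_{\mathcal P}(B\otimes_Ak(\fp))\ge 1\}$ is closed in $\Spec A$}, i.e. the ``$\mathcal P$-locus of $\psi$'' is open. Granting $(\ast)$, apply it to $\varphi$: since the closed fibre has $\mathcal P$ we get $\fm\notin Z$; but a closed subset of $\Spec R$ equals $V(J)$ for some ideal $J$, and $\fm\notin V(J)$ forces $J=R$, so $Z=\varnothing$ and every fibre of $\varphi$ has $\mathcal P$ — hence, by $\mathrm{(iii)}$, so does each of its localizations. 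Next apply $(\ast)$ to $\widehat\varphi\colon\widehat R\to\widehat S$, which is flat local with trivial base formal fibres and closed fibre $\widehat S/\fm\widehat S=\widehat{S/\fm S}$, the latter having $\mathcal P$ by $\mathrm{(ii)}$; this shows every fibre of $\widehat\varphi$ has $\mathcal P$ as well. Finally, for $\fp\in\Spec R$ the formal fibre $\widehat R\otimes_Rk(\fp)$ has $\mathcal P$ by hypothesis, $\widehat S\otimes_Rk(\fp)$ is flat over it with fibres among the fibres of $\widehat\varphi$ and so has $\mathcal P$ by $\mathrm{(iii)}$ and $\mathrm{(iv)}$, and the flat map $S\otimes_Rk(\fp)\to\widehat S\otimes_Rk(\fp)$ — whose source has $\mathcal P$ and whose fibres are (localizations of) the formal fibres of $S$ lying over $\fp$ — then forces, once more via $\mathrm{(iii)}$ and $\mathrm{(iv)}$ applied after localizing at each prime of the target, every formal fibre of $S$ to have $\mathcal P$. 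Thus the theorem follows from $(\ast)$.

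The proof of $(\ast)$ is the technical heart, and here is how I would organize it. First, the hypothesis on the formal fibres of $A$ is used to descend to the complete case: $\Spec\widehat A\to\Spec A$ is faithfully flat and quasi-compact, hence submersive, so $Z$ is closed exactly when its preimage is; and because each formal fibre of $A$ has $\mathcal P$, property $\mathrm{(iv)}$ — applied after base change along $A\to\widehat A$, through the ring $B\otimes_A(\widehat A\otimes_Ak(\fp))$ — identifies that preimage with the analogous locus for the flat map $\widehat A\to B\otimes_A\widehat A$ over the complete base $\widehat A$. Over a complete local ring, which is excellent and a homomorphic image of a regular ring, one localizes the target at a maximal ideal and takes a Cohen factorization $\widehat A\to A'\twoheadrightarrow(\text{the completed localized target})$ with $\widehat A\to A'$ flat, $A'$ complete, and closed fibre regular; this realizes each such fibre of $\psi$ as a quotient of the corresponding, well-behaved, fibre of $\widehat A\to A'$, and reduces $(\ast)$ to the openness, on $\Spec$ of an excellent ring, of the $\mathcal P$-locus of a single finitely generated module — detected by grade and projective dimension for $(\mathrm{CM})$, by the canonical module for $(\mathrm G)$, and by the deviations (André--Quillen homology) for $(\mathrm{CI})$. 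The genuinely hard point is exactly this last step: that these loci are open over excellent rings — for $(\mathrm{CM})$ this is essentially Grothendieck's openness of the Cohen--Macaulay locus, but for $(\mathrm G)$ and $(\mathrm{CI})$ it requires the finer homological invariants and is the technical core of the Avramov--Foxby theory — together with checking that the $\mathcal P$-condition on the formal fibres of $A$ is strong enough to transport this openness back along the completion $A\to\widehat A$.
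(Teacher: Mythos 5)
The paper does not prove this theorem: it is quoted directly from Avramov and Foxby (reference \cite{AF1}, Theorem~4.1) and used as a black box in the proofs of Proposition~\ref{localization-mcm} and Theorem~\ref{localization}. There is therefore no proof in the paper against which to compare your attempt, and what you have written is an attempted reconstruction of the Avramov--Foxby argument itself.

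As such a reconstruction, your list of properties (i)--(iv) of the three defects is accurate, and the deduction of the conclusion for fibers of $\varphi$, fibers of $\widehat\varphi$, and formal fibers of $S$ from the semicontinuity statement $(\ast)$ is logically sound (though $(\ast)$ is more than is needed: stability of the non-$\mathcal P$-locus under specialization already suffices over a local base). The trouble is that $(\ast)$ is exactly where the real work lies, and your sketch there has a genuine gap. After the Cohen factorization $\widehat A\to A'\twoheadrightarrow \widehat B$, each fiber of $\widehat\psi$ becomes a quotient of a geometrically regular fiber of $\widehat A\to A'$, and $\widehat B$ is a cyclic $A'$-module; but $A'$ is not of finite type (nor module-finite) over $\widehat A$, so Grothendieck-type openness for the $\mathcal P$-locus of a finitely generated module over an excellent ring gives you openness in $\Spec A'$, not in $\Spec\widehat A$, and you cannot push that down without a Chevalley-type argument that is unavailable outside the finite-type setting. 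This is precisely why Avramov--Foxby do not argue via openness at all: their route is to attach a defect to the \emph{homomorphism} via a Cohen factorization, prove it is independent of the factorization, show that localizing $\varphi$ to $R_{\fp}\to S_{\fq}$ does not increase the defect, and show that the defect of $\varphi$ coincides with the defect of the closed fiber when the formal fibers of $R$ have $\mathcal P$. If you want to complete your write-up in the spirit you began, you should replace $(\ast)$ with this defect-inequality under localization, which is what the hypothesis on the closed fiber actually feeds into.
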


\begin{Definition}
Let $R$ be a noetherian ring, and let $M$ be a finite $R$-module. Then $M$ is $\textit{maximal}$ $\textit{Cohen}$-$\textit{Macaulay}$ (``MCM'' for short) if $\depth M_{\fp}=\dim R_{\fp}$ for every $\fp \in \Supp_{R} M$.
\end{Definition}

We also need the following proposition, whose proof is obtained via Theorem 3.5. Before giving the proof, we note the following simple fact: Let $R$ be a noetherian local ring. Then for non-zero finite $R$-modules $M$ and $N$, we have
$$
\depth (M \oplus N)=\min\{\depth M,\depth N\}.
$$

\begin{Proposition}
\label{localization-mcm}
Let $\varphi:(R,\fm) \to (S,\fn)$ be a flat local map of noetherian rings, and let $N$ be an $R$-flat finite $S$-module. Assume that:

\begin{enumerate}
\item[$\mathrm{(1)}$]
the formal fibers of $R$ are Cohen-Macaulay,

\item[$\mathrm{(2)}$]
the closed fiber ring $S/\fm S$ of $\varphi$ is Cohen-Macaulay, and

\item[$\mathrm{(3)}$]
$N/\fm N$ is MCM over $S/\fm S$.

\end{enumerate}
Then $N\otimes_R\kappa(\fp)$ is MCM over $S\otimes_R\kappa(\fp)$ for all $\fp\in \Spec R$. In other words, $N_{\fq}/\fp N_{\fq}$ is MCM over $S_{\fq}/\fp S_{\fq}$ for every $\fq \in \Supp_S N$, $\fp \in \Spec R$ with $\fp=R \cap \fq$.
\end{Proposition}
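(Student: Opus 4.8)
The plan is to replace the module $N$ by a ring via Nagata's idealization and then invoke the Cohen--Macaulay case of Theorem~\ref{af}. Set $T:=S\ltimes N$, the trivial extension of $S$ by $N$: its underlying $S$-module is $S\oplus N$ and the multiplication is $(s,n)(s',n')=(ss',\,sn'+s'n)$. Since $N$ is module-finite over $S$, so is $T$; hence $T$ is noetherian, and it is local with maximal ideal $\fn\ltimes N$, so composing $\varphi$ with $S\hookrightarrow T$ yields a local homomorphism $\psi\colon R\to T$. As an $R$-module $T\cong S\oplus N$, and $S$ and $N$ are both $R$-flat, so $\psi$ is flat. The nilradical of $T$ is $\mathrm{nil}(S)\ltimes N$, so $\Spec T\to\Spec S$ is a homeomorphism; explicitly each prime of $T$ has the form $\mathfrak{Q}=\mathfrak{q}\ltimes N$ with $\mathfrak{q}\in\Spec S$, and $T_{\mathfrak{Q}}\cong S_{\mathfrak{q}}\ltimes N_{\mathfrak{q}}$, whence $\dim T_{\mathfrak{Q}}=\dim S_{\mathfrak{q}}$.

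Next I would record the idealization criterion for Cohen--Macaulayness. For a noetherian local ring $(A,\fm_A)$ and a finite $A$-module $M$, the ring $A\ltimes M$ carries the $A$-module structure $A\oplus M$, so $\dim(A\ltimes M)=\dim A$ (their reduced rings coincide) and, by the depth formula quoted before the statement, $\depth(A\ltimes M)=\min\{\depth A,\depth M\}$; hence $A\ltimes M$ is Cohen--Macaulay exactly when $A$ is Cohen--Macaulay and $\depth_A M=\dim A$. Moreover, if $A$ is Cohen--Macaulay then the support-wise condition defining MCM --- $\depth M_{\fp}=\dim A_{\fp}$ for all $\fp\in\Supp_A M$ --- is equivalent to $\depth_A M=\dim A$: the condition forces $\Ass_A M\subseteq\Min A$, so when $\dim A>0$ one can choose $x\in\fm_A$ regular on both $A$ and $M$ and induct on $\dim A$ after passing to $A/xA$ and $M/xM$. (Alternatively, with a canonical module $\omega$ for $A$ one has $\operatorname{Ext}^i_A(M,\omega)=0$ for $i>0$ iff the same holds after localizing at every prime of $\Supp_A M$, and over $A_{\fp}$ that vanishing is precisely the maximal Cohen--Macaulayness of $M_{\fp}$.) Consequently hypotheses (2) and (3) say exactly that the closed fiber $T/\fm T=(S/\fm S)\ltimes(N/\fm N)$ of $\psi$ is Cohen--Macaulay, while hypothesis (1) is that the formal fibers of $R$ are Cohen--Macaulay.

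Now apply Theorem~\ref{af}, case (CM), to the flat local map $\psi\colon R\to T$: every fiber ring $T\otimes_R\kappa(\fp)$ with $\fp\in\Spec R$ is Cohen--Macaulay. Idealization commutes with base change, so $T\otimes_R\kappa(\fp)=(S\otimes_R\kappa(\fp))\ltimes(N\otimes_R\kappa(\fp))$. Localizing at an arbitrary $\mathfrak{Q}\in\Spec(S\otimes_R\kappa(\fp))$ gives the Cohen--Macaulay local ring $(S\otimes_R\kappa(\fp))_{\mathfrak{Q}}\ltimes(N\otimes_R\kappa(\fp))_{\mathfrak{Q}}$, so the easy direction of the idealization criterion yields $\depth(N\otimes_R\kappa(\fp))_{\mathfrak{Q}}=\dim(S\otimes_R\kappa(\fp))_{\mathfrak{Q}}$. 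Letting $\mathfrak{Q}$ range over $\Supp(N\otimes_R\kappa(\fp))$, this is exactly the assertion that $N\otimes_R\kappa(\fp)$ is MCM over $S\otimes_R\kappa(\fp)$. The reformulation in terms of $N_{\fq}/\fp N_{\fq}$ and $S_{\fq}/\fp S_{\fq}$, for $\fq\in\Supp_S N$ with $\fp=R\cap\fq$, is just the identification of $-\otimes_R\kappa(\fp)$ with localization at $\fp$ followed by reduction modulo $\fp$, read off prime by prime on $\Supp_S N$.

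The one point that needs genuine care is the equivalence, over a Cohen--Macaulay local base, between the support-wise MCM condition and the global equality $\depth_A M=\dim A$ (used to verify that the closed fiber of $\psi$ is Cohen--Macaulay); once that is settled, everything else is the formal bookkeeping of idealizations together with a direct appeal to Theorem~\ref{af}.
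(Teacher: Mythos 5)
Your proof is correct and follows essentially the same route as the paper's: pass to the Nagata idealization $S*N$, check via the depth formula that its closed fiber over $R$ is Cohen--Macaulay, and apply Theorem~\ref{af} (CM case) to the flat local map $R\to S*N$ before unwinding the idealization fiberwise. The only difference is organizational---you make explicit the equivalence between the support-wise MCM condition and $\depth_A M=\dim A$ over a Cohen--Macaulay local ring, which the paper handles implicitly through the split exact sequences at each localization.
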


\begin{proof}
To prove this, we will use Nagata's trivialization to reduce the problem to the local flat map of noetherian rings and then apply Theorem \ref{af}.

Let $S*N=\{(a,m)~|~a \in S,~m \in N\}$ whose ring structure is given by $(a,m)*(b,n):=(ab,an+bm)$. As $S$-modules, $S*N$ and $S \oplus N$ are isomorphic. The subset $0*N$ of the ring $S*N$ is an ideal, which is isomorphic to $N$, and we get the short exact sequence of $S$-modules:
$$
\begin{CD}
0 @>>> 0*N @>>> S*N @>>> S @>>> 0,
\end{CD}
$$
where $S$ is naturally identified with the subring $S*0$ of $S*N$. Since $S \to S*N$ is a trivial extension as modules, the above short exact sequence splits. Furthermore, $S*N$ is a module-finite $S$-algebra.

For $\fp \in \Spec R$ and $\fq \in \Spec S$ with $\fp=R \cap \fq$, we have a (split) short exact sequence of $S_{\fq}/\fp S_{\fq}$-modules:
$$
\begin{CD}
0 @>>> (0*N)_{\fq}/\fp(0*N)_{\fq} @>>> (S*N)_{\fq}/\fp(S*N)_{\fq} @>>> S_{\fq}/\fp S_{\fq} @>>> 0.
\end{CD}
$$
In particular, if $\fq \in \Supp N$, $N_{\fq}/\fp N_{\fq} \ne 0$ and by the remark quoted above, we have
$$
\depth(N_{\fq}/\fp N_{\fq})=\depth\big((0*N)_{\fq}/\fp(0*N)_{\fq}\big)=\depth\big((S*N)_{\fq}/\fp(S*N)_{\fq}\big),
$$
and also
$$
\dim(S_{\fq}/\fp S_{\fq})=\dim\big((S*N)_{\fq}/\fp(S*N)_{\fq}\big).
$$
Since $N$ is $R$-flat by assumption, $\varphi_{N}:R \to S*N$ is a flat local map of local rings. As to the closed fiber ring $(S*N)/\fm(S*N)=(S/\fm S)*(N/\fm N)$, we have
\begin{align} 
\depth\big((S*N)/\fm(S*N)\big)&=\depth\big((S/\fm S)*(N/\fm N)\big)\notag\\
&=\min\{\depth(S/\fm S),\depth(N/\fm N)\}\notag\\
&=\dim(S/\fm S)\tag{\dag}\\
&=\dim\big((S/\fm S)*(N/\fm N)\big)=\dim\big((S*N)/\fm(S*N)\big)\tag{\ddag},
\end{align}
where (\dag) is true since $N/\fm N$ is MCM over $S/\fm S$ by assumption, and (\ddag) is true since $S/\fm S \to (S/\fm S)*(N/\fm N)$ is module-finite. Consequently, $(S*N)/\fm(S*N)=(S/\fm S)*(N/\fm N)$ is MCM over $S/\fm S$. Now we may apply Theorem \ref{af} to $\varphi_N:R \to S*N$ to conclude that $(S*N)_{\fq}/\fp(S*N)_{\fq}$ is Cohen-Macaulay, but then
\begin{align}
\depth(N_{\fq}/\fp N_{\fq})&=\depth\big((S*N)_{\fq}/\fp(S*N)_{\fq}\big)\notag\\
&=\dim\big((S*N)_{\fq}/\fp(S*N)_{\fq}\big)\notag\\
&=\dim(S_{\fq}/\fp S_{\fq})\notag,
\end{align}
which finishes the proof.
\end{proof}

The following characterization of $F$-purity for $F$-finite rings is needed to prove our main theorem (Theorem \ref{localization}).

\begin{Remark}[\cite{HR}, Corollary 5.3]
\label{Remark1}
If $R$ is an $F$-finite noetherian ring, then one verifies that the following conditions are equivalent:
\begin{enumerate}
\item $R$ is $F$-pure.
\item $R_{\fp}$ is $F$-pure for any $\fp \in \Spec R$.
\item The Frobenius morphism $R \to {^1R}$ is a pure extension.
\item The Frobenius morphism $R \to {^1R}$ splits.
\end{enumerate}
\end{Remark}

Before proving the main theorem, we need the following lemma.

\begin{Lemma}
\label{pure}
Let $(R,\fm)$ be an $F$-finite algebra over a field $K$ of characteristic $p>0$. If $K \to L$ is a finite separable extension, then $R$ is $F$-pure if and only if $R \otimes_{K} L$ is $F$-pure.
\end{Lemma}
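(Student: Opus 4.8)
The plan is to deduce both directions from the splitting criterion for $F$-purity (Remark~\ref{Remark1}), applied to the finite extension $R\to R':=R\otimes_K L$. Since $L/K$ is a finite field extension, $R'$ is module-finite over $R$, hence Noetherian and $F$-finite, and $R\to R'$ is a split injection of $R$-modules: extending $\{1\}$ to a $K$-basis of $L$ and projecting onto the first coordinate gives an $R$-linear retraction of $r\mapsto r\otimes 1$. In particular $R\to R'$ is a pure ring homomorphism. For the implication ``$R'$ is $F$-pure $\Rightarrow$ $R$ is $F$-pure'' (which uses neither separability nor $F$-finiteness) I would argue: if $R'$ is $F$-pure then the Frobenius $R'\to{}^1R'$ is pure, so the composite $R\to R'\to{}^1R'$ is pure; but this composite also equals the composite of $F_R\colon R\to{}^1R$ with the map ${}^1R\to{}^1R'$ induced by $R\to R'$, and a map whose composition with a further map is pure is itself pure (immediate from the tensor characterization of purity). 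Hence $F_R\colon R\to{}^1R$ is pure, and $R$ is $F$-pure by Remark~\ref{Remark1}.

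For the converse, the key point is to show that the Radu--Andr\'{e} morphism $w_{R'/R}\colon R'\otimes_R{}^1R\to{}^1R'$ is an \emph{isomorphism} (so that, informally, ${}^1(R\otimes_K L)\cong ({}^1R)\otimes_R R'$). Its image in ${}^1R'=R'$ is the subring generated by the image of $R$ and by $(R')^{p}$, i.e.\ $R[(R')^{p}]$; since $L/K$ is separable one has $L=KL^{p}$, and as $K\subseteq R$ this forces $R'=R[(R')^{p}]$, so $w_{R'/R}$ is surjective (this also re-proves that $R'$ is $F$-finite, since ${}^1R'$ is then a quotient of the finite $R'$-module $R'\otimes_R{}^1R$). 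On the other hand $R\to R'$ is a reduced homomorphism: it is flat, and each fibre $k(\fp)\otimes_K L$ is geometrically reduced because $L\otimes_K M$ is reduced for every field extension $M/k(\fp)$, as $L/K$ is finite separable. By Theorem~\ref{Dumi} the map $w_{R'/R}$ is therefore injective, hence bijective.

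To finish the converse, use that $R$ is $F$-finite and $F$-pure, so $F_R\colon R\to{}^1R$ admits an $R$-linear retraction $\pi$ (Remark~\ref{Remark1}). Base changing $\pi$ along $R\to R'$ produces an $R'$-linear retraction of the map $R'=R\otimes_R R'\to{}^1R\otimes_R R'=R'\otimes_R{}^1R$ obtained from $F_R$. Under the isomorphism $w_{R'/R}$ this last map corresponds to the Frobenius $F_{R'}\colon R'\to{}^1R'$, since the composite $R'\to R'\otimes_R{}^1R\to{}^1R'$ (the second arrow being $w_{R'/R}$, the first being $s\mapsto s\otimes 1$) sends $s$ to $s^{p}$. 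Transporting the retraction through $w_{R'/R}^{-1}$ gives an $R'$-linear retraction of $F_{R'}$, so $R'=R\otimes_K L$ is $F$-pure by Remark~\ref{Remark1}.

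The step I expect to be the main obstacle is proving that $w_{R'/R}$ is an isomorphism: surjectivity hinges on the characterization of separability as ``$L=KL^{p}$'', injectivity on Dumitrescu's reducedness criterion (Theorem~\ref{Dumi}), and one must then check carefully that $w_{R'/R}$ identifies the base-changed Frobenius of $R$ with the Frobenius of $R'$, so that the retraction can legitimately be carried across. The remaining steps --- the splitting descent for the easy direction and the base change of the retraction --- are routine once these identifications are in place.
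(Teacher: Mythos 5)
Your proof is correct, and in outline it matches the paper's: both directions reduce to the splitting criterion for $F$-purity (Remark~\ref{Remark1}), and the nontrivial direction rests on the Radu--Andr\'e morphism $w_{R'/R}\colon R'\otimes_R {}^1R \to {}^1R'$ being an isomorphism for $R'=R\otimes_K L$, after which the base-changed retraction of $F_R$ is transported across $w_{R'/R}$ to a retraction of $F_{R'}$. Where you genuinely depart from the paper is in how this isomorphism is established: the paper observes that $R\to R'$ is finite \'etale and cites Gabber--Ramero for (weakly) \'etale base change, whereas you prove it directly --- surjectivity from the separability criterion $L=K[L^p]$, which shows the image $R[(R')^p]$ of $w_{R'/R}$ exhausts $R'$, and injectivity from Dumitrescu's characterization of reduced morphisms (Theorem~\ref{Dumi}) applied to the flat map $R\to R'$, whose fibres $L\otimes_K k(\fp)$ are geometrically reduced by separability. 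That self-contained argument is a real, if modest, gain: it removes an external reference and simultaneously re-derives the $F$-finiteness of $R'$. Your easy direction (descending purity of $F_R$ from purity of the composite $R\to R'\to {}^1R'$ via its factorization through ${}^1R$) is a mild variant of the paper's appeal to Fedder's direct-summand criterion; both work.
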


\begin{proof}
If $R \otimes_{K} L$ is $F$-pure, it follows by (\cite{Fedd}, Proposition 1.3 (6)) that $R$ is also $F$-pure since $R$ is a direct summand of $R \otimes_{K} L$. Let $K \to L$ be a finite separable extension. Then since the extension $R \to R \otimes_{K} L$ is finite $\mathrm{\acute{e}}$tale, the Radu-Andr$\mathrm{\grave{e}}$ morphism:
$$
(R \otimes_{K} L) \otimes_{R} {^1R} \to {^1(R \otimes_{K} L)}
$$
is an isomorphism. (This isomorphism is proved in~\cite{GR} for weakly $\mathrm{\acute{e}}$tale maps.) By hypothesis on the ring $R$, the Frobenius morphism:
$$
R \otimes_{K} L \to (R \otimes_{K} L) \otimes_{R} {^1R} \to {^1(R \otimes_{K} L)}
$$
splits, which is the desired claim.
\end{proof}

Let $k(\fp)$ denote the residue field of $R$ at $\fp \in \Spec R$.

\begin{Theorem}
\label{localization}
Let $\varphi:(R,\fm,k_{R}) \to (S,\fn,k_{S})$ be a flat local map of $F$-finite rings. Assume that the closed fiber of $\varphi$ is Gorenstein and geometrically $F$-pure over $k_{R}$. Then the fiber of $\varphi$ at every $\fp \in \Spec R$ is geometrically $F$-pure over $k(\fp)$.
\end{Theorem}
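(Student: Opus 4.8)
The plan is to convert geometric $F$-purity of the fibers into a splitting statement about Radu--Andr\`e morphisms, reduce that to a single assertion about $w_{S/R}$ via a base-change identity, and then extract that assertion from the closed-fiber hypothesis by running Proposition~\ref{localization-mcm} on the cokernel of $w_{S/R}$. I would begin with two reductions. First, $F$-finiteness makes $R$ excellent, so its formal fibers are geometrically regular, hence reduced; $S/\fm S$, being geometrically $F$-pure, is geometrically reduced; and $\varphi$ is flat, so Theorem~\ref{Ni} shows $\varphi$ is reduced, whence by Theorem~\ref{Dumi} the map $w_{S/R}$ is injective and $M:=\Coker(w_{S/R})\cong S/R[S^{p}]$ is $R$-flat. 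Second, I would record the reformulation: for an $F$-finite noetherian algebra $B$ over an $F$-finite field $K$, $B$ is geometrically $F$-pure over $K$ if and only if $w_{B/K}\colon W_{B/K}=B\otimes_{K}{^1K}\to{^1B}$ is a split monomorphism of $W_{B/K}$-modules. Indeed $B\to W_{B/K}$ is always $B$-split because ${^1K}\cong K^{1/p}$ is free over $K$ with $1$ extending to a basis, so a retraction of $w_{B/K}$ composes with it to split the Frobenius $B\to{^1B}$; the geometric direction follows by base changing the retraction along $W_{B/K}\to W_{(B\otimes_{K}L)/L}=W_{B/K}\otimes_{{^1K}}{^1L}$ for finite $L/K$. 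Conversely, if $B$ is geometrically $F$-pure then $C:=B\otimes_{K}K^{1/p}$ is $F$-pure, hence $F$-split by Remark~\ref{Remark1}; as $B$ is geometrically reduced, $w_{B/K}$ is injective (Theorem~\ref{Dumi}), giving ring inclusions $C\hookrightarrow{^1B}=B^{1/p}\hookrightarrow C^{1/p}$, and restricting an $F$-splitting $C^{1/p}\to C$ to $B^{1/p}$ retracts $w_{B/K}$.

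Next comes the base-change identity. For $\fp\in\Spec R$ set $T:=S\otimes_{R}k(\fp)$; using ${^1(X\otimes_{Y}Z)}={^1X}\otimes_{{^1Y}}{^1Z}$ one checks $W_{T/k(\fp)}=W_{S/R}\otimes_{{^1R}}{^1k(\fp)}$ and $w_{T/k(\fp)}=w_{S/R}\otimes_{{^1R}}{^1k(\fp)}$, where ${^1k(\fp)}$ is the residue field of ${^1R}$ at the prime corresponding to $\fp$. By the reformulation it therefore suffices to split this base change of $w_{S/R}$ for every $\fp$. I would study the flat map ${^1R}\to W_{S/R}=S\otimes_{R}{^1R}$ (flat because $\varphi$ is, and with $W_{S/R}$ module-finite over $S$, hence semilocal with every maximal ideal over ${^1\fm}$). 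Its closed fiber is $W_{(S/\fm S)/k_{R}}=(S/\fm S)\otimes_{k_{R}}k_{R}^{1/p}$, which is local as a purely inseparable base change of the local ring $S/\fm S$, so $W_{S/R}$ is in fact local. Since $k_{R}$ is $F$-finite, $k_{R}^{1/p}$ is a finite complete intersection over $k_{R}$, so this closed fiber is Gorenstein; together with the Gorenstein formal fibers of ${^1R}\cong R$, Theorem~\ref{af} makes every fiber $W_{T/k(\fp)}$ of ${^1R}\to W_{S/R}$ Gorenstein.

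Now I regard $M=\Coker(w_{S/R})$ as a finite $W_{S/R}$-module; it is ${^1R}$-flat because its ${^1R}$-module structure is the $R$-module structure on $S/R[S^{p}]$, already known flat. Injectivity of $w_{S/R}$ lets one identify $M\otimes_{{^1R}}{^1k_{R}}$ with $M_{0}:=\Coker(w_{(S/\fm S)/k_{R}})$; since $S/\fm S$ is geometrically $F$-pure, the reformulation gives that $w_{(S/\fm S)/k_{R}}$ splits, so $M_{0}$ is a direct summand of ${^1(S/\fm S)}$, and the latter is maximal Cohen--Macaulay over $W_{(S/\fm S)/k_{R}}$ because it is module-finite over it of the same dimension and $S/\fm S$ is Gorenstein, hence Cohen--Macaulay. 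Thus $M_{0}$ is MCM over the Gorenstein closed fiber, and Proposition~\ref{localization-mcm} applied to ${^1R}\to W_{S/R}$ with the module $M$ shows $\overline{M}_{\fp}:=M\otimes_{{^1R}}{^1k(\fp)}$ is MCM over $W_{T/k(\fp)}$ for every $\fp$. Because $M$ is ${^1R}$-flat, $0\to W_{T/k(\fp)}\xrightarrow{w_{T/k(\fp)}}{^1T}\to\overline{M}_{\fp}\to 0$ is exact; since $\overline{M}_{\fp}$ is MCM over the Gorenstein local ring $W_{T/k(\fp)}$, we have $\ext^{1}_{W_{T/k(\fp)}}(\overline{M}_{\fp},W_{T/k(\fp)})=0$, so the sequence splits, $w_{T/k(\fp)}$ splits, and the reformulation gives that $T$ is geometrically $F$-pure over $k(\fp)$.

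I expect the main obstacle to be exactly the passage from the closed fiber to all fibers: one cannot localize $R$ without destroying the closed-fiber hypothesis, and $F$-purity is not a homological condition that Avramov--Foxby-type machinery governs directly. The two devices that make it work are (a) recognizing the Radu--Andr\`e morphism of a fiber as a genuine base change of $w_{S/R}$ along ${^1R}\to{^1k(\fp)}$, so that geometric $F$-purity becomes a pure splitting question; and (b) routing the propagation through the cokernel module $M$, whose closed fiber is MCM---this is where the Gorenstein hypothesis is used, both to make $W_{(S/\fm S)/k_{R}}$ Gorenstein and to make ${^1(S/\fm S)}$ Cohen--Macaulay over it---so that Proposition~\ref{localization-mcm} carries MCM-ness to every fiber, after which MCM-over-Gorenstein forces the $\ext^{1}$-vanishing that yields the splitting. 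The remaining work---the base-change identities for the Frobenius twists ${^1(-)}$, and the flatness and locality bookkeeping around $W_{S/R}$---is routine but must be carried out with care.
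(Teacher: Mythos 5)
Your proof is correct and follows the same overall strategy as the paper: reduce to all fibers being Gorenstein via Theorems~\ref{Ni} and~\ref{af}, interpret geometric $F$-purity of a fiber as a splitting of a Radu--Andr\`e morphism, observe that the fiber's Radu--Andr\`e morphism is a base change of $w_{S/R}$, propagate maximal Cohen--Macaulayness of $\Coker(w_{S/R})$ from the closed fiber to every fiber via Proposition~\ref{localization-mcm}, and invoke Gorensteinness to force the $\ext^1$-vanishing that produces the splitting.

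The one genuine streamlining over the paper is your up-front reformulation: for an $F$-finite algebra $B$ over an $F$-finite field $K$, geometric $F$-purity of $B$ over $K$ is equivalent to the splitting of the single morphism $w_{B/K}$ (at $e=1$). The paper instead works with $e$-th Radu--Andr\`e rings $W^e_{\fq}$ for $e$ chosen according to the purely inseparable extension $L/k(\fp)$, and separately invokes Lemma~\ref{pure} to handle the separable part of a finite extension. Your reformulation subsumes both of these: the ``$\Leftarrow$'' direction works for arbitrary finite $L/K$ by base-changing a retraction of $w_{B/K}$ along ${^1K}\to{^1L}$, so the separable/inseparable dichotomy and the variable $e$ disappear. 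What the paper's more explicit route buys is transparency at the closed fiber (the displayed factorization $W^e_{\fn}\xrightarrow{\alpha}{^e(S/\fm S)}\xrightarrow{\beta}{^e(W^e_{\fn})}$ of Frobenius, from which the splitting of $\alpha$ is extracted directly); your version packages the same content into the reformulation lemma, which must then be proved with some care about the identifications ${^1(-)}\leftrightarrow(-)^{1/p}$ (e.g.\ the ring inclusions $C\hookrightarrow B^{1/p}\hookrightarrow C^{1/p}$ and the claim that their composite is the Frobenius of $C$). These bookkeeping points are routine, as you note, and your argument is sound once they are made explicit.
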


\begin{proof}
In what follows, we fix the notation. Choose $\fp \in \Spec R$ and $\fq \in \Spec S$ such that $\fp=R \cap \fq$. First, observe that the closed fiber of $\varphi$ is geometrically reduced over $k_{R}$ by hypothesis, so that Theorem \ref{Ni} and Theorem \ref{af}(G) imply that the fiber of $\varphi$ at every $\fp \in \Spec R$ is Gorenstein and geometrically reduced over $k(\fp)$.

Let $A$ be a noetherian algebra over a field $K$ of characteristic $p>0$, and let $K \to L$ be a finite field extension. Then $K \to L$ splits into a sequence $K \to K' \to L$, where $K \to K'$ is purely inseparable, while $K' \to L$ is separable. By Lemma \ref{pure}, (\cite{Fedd}, Proposition 1.3 (6)), and together with the sequence of pure extensions:
$$
A \to A \otimes_{K} K' \to (A \otimes_{K} K') \otimes_{K'} L \simeq A \otimes_{K} L,
$$
it suffices to show that the base change of any fiber ring of $\varphi:R \to S$ with respect to a purely inseparable extension is $F$-pure.

Let $k(\fp) \to L$ be a finite purely inseparable extension. Then we can write $L=k(\fp)(\theta_1,\dots,\theta_t)$ for some $\theta_1,\dots,\theta_t\in L$. We claim that it suffices to show that $S\otimes_{R} k(\fp) \otimes_{k(\fp)} {^ek(\fp)}$ is $F$-pure for some integer $e > 0$, and we reason as follows. We let $p^{e_{i}}=[k(\fp)(\theta_{i}):k(\fp)]$ and $e=\max \{e_{i}~|~ 1 \le i \le t \}$. The finite extension $k(\fp)\to {^ek(\fp)}$ can be identified with $k(\fp) \to k(\fp)^{1/p^e}$. Hence $k(\fp) \to L$ can be naturally identified as a subextension of $k(\fp) \to k(\fp)^{1/p^e}$. Once $S\otimes_{R} k(\fp)\otimes_{k(\fp)} {^ek(\fp)}$ is $F$-pure, so is $S\otimes_{R} k(\fp)\otimes_{k(\fp)} L$ since $S\otimes_{R} k(\fp)\otimes_{k(\fp)} L$ is a direct summand of $S\otimes_{R} k(\fp)\otimes_{k(\fp)} {^ek(\fp)}$. Therefore, it suffices to show that $S\otimes_{R} k(\fp)\otimes_{k(\fp)} {^ek(\fp)}$ is $F$-pure. To do this, we set
$$
W^{e}_{\fq}:=S_{\fq}/\fp S_{\fq} \otimes_{k(\fp)} {^ek(\fp)} \simeq S_{\fq} \otimes_{R_{\fp}} {^ek(\fp)}.
$$
This is the Radu-Andr\'{e} ring corresponding to $k(\fp)\to S_{\fq}/\fp S_{\fq}$. Then $W^{e}_{\fq}$ is the localization of $S \otimes_{R} {^ek(\fp)}$ at $\fq \in \Spec S$. Since $S \otimes_{R} {^ek(\fp)}$ is reduced, $W^{e}_{\fq}$ is also reduced. Note, in general, that if $A$ is a local algebra over a field $k$ of characteristic $p > 0$, then $A \otimes_{k} {^ek}$ is also local. The above observations also imply the following. $S_{\fq} \to S_{\fq} \otimes_{R} {^eR}$ is a module-finite extension of local rings for any $\fq \in \Spec S$. Suppose that $W^{e}_{\fq}$ is $F$-pure for all $\fq \in \Spec S$. Since the Frobenius morphism commutes with localization, this implies that $S \otimes_{R} {^ek(\fp)}$ is $F$-pure. Hence we are reduced to showing that the local ring $W^{e}_{\fq}$ is $F$-pure.

Since $S/\fm S$ is geometrically $F$-pure over $k_{R}$ by hypothesis, the $e$-th Frobenius on $W^e_{\fn}$ splits as well. Consider the following maps of module-finite $W^{e}_{\fn}$-algebras:
$$
\begin{CD}
W^{e}_{\fn}  @>\alpha >> ^e(S/\fm S) @>\beta >> {^e(W^{e}_{\fn})},
\end{CD}
$$
where $\alpha$ is the Radu-Andr\'{e} morphism and $\beta$ is induced by the natural inclusion $S/\fm S\to W^e_{\fn}=S/\fm S\otimes_{k_R}{^ek_R}$. Then we find that
$$
\beta\alpha(\bar{s}\otimes c)=\beta(\bar{s}^{p^e}c)=\bar{s}^{p^e}c\otimes 1=\bar{s}^{p^e}\otimes c\cdot 1=\bar{s}^{p^e}\otimes c^{p^e}
$$
for $\bar{s}\in S/\fm S$ and $c\in {^ek_R}$. Therefore, $\beta\alpha$ is exactly the $e$-th Frobenius on $W^e_{\fn}$.

We claim that $\alpha$ splits and we reason as follows. Since the $e$-th Frobenius on $W^e_{\fn}$ splits, there is a map $\gamma:{^e(W^e_{\fn})}\to W^e_{\fn}$ such that $\gamma\beta\alpha=\gamma F^e_{W^e_{\fn}}=\mathrm{id}_{W^e_{\fn}}$. Consequently, $\alpha$ splits. (Actually, one can show that, if $\alpha$ splits, so does $F^e_{W^e_{\fn}}$. Assume that there is a map $\alpha':{^e(S/\fm S)}\to W^{e}_{\fn}$ such that $\alpha'\alpha=\mathrm{id}_{W^e_{\fn}}$. Note that $\beta$ splits since it is clearly a finitely generated free extension. Let $\beta':{^e(W^{e}_{\fn})}\to {^e(S/\fm S)}$ be such that $\beta'\beta=\mathrm{id}_{^e(S/\fm S)}$. Then it is evident that $\alpha'\beta'F^e_{W^e_{\fn}}=\alpha'\beta'\beta\alpha=\mathrm{id}_{W^e_{\fn}}$. Consequently, the $e$-th Frobenius on $W^e_{\fn}$ splits.)

Let $N:=\Coker(w^{e}_{S/R})$. Then by Theorem \ref{Dumi}, the sequence of $W^{e}_{S/R}$-modules:
\begin{equation}
\label{radu-andre-sequence}
0 \to S \otimes_{R} {^eR}  \xrightarrow{w^{e}_{S/R}} {^eS} \to N \to 0
\end{equation}
is exact, and $N$ is an $R$-flat finite $W^{e}_{S/R}$-module. Furthermore, the ring $W^{e}_{\fq}$ is Gorenstein for every $\fq \in \Spec S$ because $S_{\fq}/\fp S_{\fq}$ is. In particular, we have $K_{W^{e}_{\fq}} \simeq W^{e}_{\fq}$, which is the canonical module of $W^{e}_{\fq}$.

We claim that ${^e(S/\fm S)}$ is MCM over $W^{e}_{\fn}$ and we reason as follows. Let $d:=\dim(S/\fm S)$. Notice that $W^e_{\fn}$ is module-finite over $S/\fm S$, so we have $d=\dim(W^e_{\fn})$. Since $S/\fm S$ is Gorenstein, every system of parameters $\bar{s}_1,\dots,\bar{s}_d$ of $S/\fm S$ is regular. But then $\bar{s}^{p^e}_1,\dots,\bar{s}^{p^e}_d$ also form a regular sequence on $S/\fm S$. Since the image of $\bar{s}_{k}\otimes 1$ under the map $W^{e}_{\fn} \to {^e(S/\fm S)}$ is just $\bar{s}^{p^e}_k$ for $1 \le k \le d$, it follows that $\bar{s}_1\otimes 1,\dots,\bar{s}_d\otimes 1$ form a regular sequence on ${^e(S/\fm S)}$, which is the required claim.

Let $\alpha:W^{e}_{\fn} \to {^e(S/\fm S)}$ be the same map as above. Then since
$$
W^e_{\fn} \simeq S \otimes_{R} k_{R} \otimes_{k_R} {^ek_R} \simeq  S \otimes_R{^ek_R} \simeq S\otimes_R {^eR}\otimes_{^eR} {^ek_{R}},
$$
we find that $\alpha=w^{e}_{S/R} \otimes \mathrm{id}_{^ek_{R}}$ by applying $(-)\otimes_{^eR} {^ek_{R}}$ to the map $w^{e}_{S/R}:S\otimes_R {^eR}\to {^eS}$. Thus, applying $(-)\otimes_{^eR} {^ek_{R}}$ to the exact sequence (\ref{radu-andre-sequence}), we have an exact sequence of $W^{e}_{\fn}$-modules:
$$
\begin{CD}
0 @>>> W^{e}_{\fn} @>\alpha >> {^e(S/\fm S)} @>>> N/\fm N @>>> 0,
\end{CD}
$$
which splits, as we have already observed. Then
$$
\depth_{W^e_{\fn}}\big({^e(S/\fm S)}\big)= \min\{\depth(W^e_{\fn})=\dim(W^e_{\fn}),\depth_{W^e_{\fn}}(N/\fm N)\}.
$$
But since $\depth_{W^e_{\fn}}\big({^e(S/\fm S)}\big)=\dim(W^e_{\fn})$ and $\depth_{W^e_{\fn}}(N/\fm N)\leq \dim(W^e_{\fn})$, we have
$$
\depth_{W^e_{\fn}}(N/\fm N) \le \dim(W^e_{\fn})=\depth_{W^e_{\fn}}\big({^e(S/\fm S)}\big) \le \depth_{W^e_{\fn}}(N/\fm N),
$$
which implies that $N/\fm N$ is MCM over $W^{e}_{\fn}$. Proposition \ref{localization-mcm} applied to the flat local map $^eR \to S \otimes_{R} {^eR}$ implies that $N_{\fq}/\fp N_{\fq}$ is also MCM over $W^{e}_{\fq}$ for every $\fq \in \Supp_{S} N$. We also have
$$
\ext^{1}_{W^{e}_{\fq}}(N_{\fq}/\fp N_{\fq},K_{W^{e}_{\fq}}) \simeq \ext^{1}_{W^{e}_{\fq}}(N_{\fq}/\fp N_{\fq}, W^{e}_{\fq})=0,
$$
due to the local duality (\cite{BH}, Theorem 3.3.10). On the other hand, the sequence of $W^{e}_{\fq}$-modules:
$$
\begin{CD}
0 @>>> W^{e}_{\fq} @>>> ^e(S_{\fq}/\fp S_{\fq}) @>>> N_{\fq}/\fp N_{\fq} @>>> 0
\end{CD}
$$
is obtained by applying $(-)\otimes_{^eR} {^ek(\fp)}$ to the sequence (\ref{radu-andre-sequence}) and then localizing at $\fq$. This is a short exact sequence. Since $\ext^{1}_{W^{e}_{\fq}}(N_{\fq}/\fp N_{\fq}, W^{e}_{\fq})=0$, it is straightforward that $W^{e}_{\fq}\to {^e(S_{\fq}/\fp S_{\fq})}$ splits, which implies that the $e$-th Frobenius on $W^{e}_{\fq}$ splits since the $e$-th Frobenius on $W^{e}_{\fq}$ is the same as (the proof is exactly the same as when $\fp=\fm$)
$$
\begin{CD}
W^{e}_{\fq} @>>> {^e(S_{\fq}/\fp S_{\fq})} @>>> {^e(W^{e}_{\fq})}. \\
\end{CD}
$$
This proves that $W^{e}_{\fq}$ is $F$-pure and hence completes the proof of the theorem.
\end{proof}

\begin{Remark}
\begin{enumerate}
\item
Theorem \ref{localization} holds for $F$-injective rings in place of $F$-pure rings, since these notions are equivalent to each other for Gorenstein local rings (\cite{Fedd}, Lemma 3.3). It is easy to give an example of an $F$-pure ring, which is not geometrically $F$-pure. Let $K$ be a field of characteristic $p>0$ and let $L=K^{1/p}$. Then $L$ is not geometrically reduced over $K$ as follows. Let $a \in K$ be such that $K'=K[t]/(t^{p}-a)$ is a field. However, we get $L \otimes_{K} K' \simeq L[t]/(t-b)^{p}$ with $b^{p}=a$. We also mention that Enescu~\cite{E1} constructed an algebra that is $F$-injective, but is not geometrically $F$-injective.
\item
Note that the localization problem may be formulated for a ring homomorphism of (not necessarily local) noetherian rings in the following way. If all the closed fibers for a flat ring homomorphism have a property $\mathcal{P}$, then do all the fibers of the map have also $\mathcal{P}$? The localization problem can be generalized to this set-up if, for example, a (global) property $\mathcal{P}$ for a ring is defined on its local rings. In particular, Proposition \ref{localization-mcm} and Theorem \ref{localization} hold for flat ring homomorphisms of noetherian rings without any modifications in the proof.
\end{enumerate}
\end{Remark}

\section{Geometric Consequences}

In this section, we establish some geometric consequences of results that we have proved in the previous section.

Let $\varphi:R \to S$ be a ring homomorphism. For a property $\mathcal{P}$, we define $U_{\varphi}(\mathcal{P})$ to be the set of all $\fp \in \Spec R$ such that the fiber $S \otimes_{R} k(\fp)$ has $\mathcal{P}$. Under certain hypotheses, it is known that the set $U_{\varphi}(\mathcal{P})$ can possess some nature with respect to the Zariski topology for many interesting cases of $\mathcal{P}$ (some related results are found in~\cite{FOV}).

We recall preliminary notations and results that we shall use in the following. A subset of a noetherian scheme $X$ is $\textit{constructible}$ if it is written as a disjoint union of finitely many locally closed subsets of $X$. A subset $U \subset X$ is open if and only if $U$ is constructible and is stable under generization. We record the following:

\begin{Lemma}
\label{flat}
Let $(R,\fm,k_{R}) \to (S,\fn,k_{S})$ be a local map of noetherian rings, and let $N$ be a finite $S$-module. Then the following hold:

\begin{enumerate}
\item[$\mathrm{(1)}$] If both $S$ and $N$ are $R$-flat, then $\depth_{S} N=\depth_{R} R +\depth_{S}(N/\fm N)$.

\item[$\mathrm{(2)}$]
Assume that $k_{R} \to k_{S}$ is a finitely generated extension of fields. Then $N \otimes_{R} k_{R}$ is MCM over $S \otimes_{R} k_{R}$ if and only if $N \otimes_{R} k_{S}$ is MCM over $S \otimes_{R} k_{S}$.
\end{enumerate}
\end{Lemma}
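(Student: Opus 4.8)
Here is the plan. The two parts are handled separately: part (1) is the standard additivity of depth along flat local maps, while part (2) is the real content and is proved by comparing the two MCM conditions prime by prime across the flat base change $S\otimes_R k_R\to S\otimes_R k_S$.

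For (1), I would observe that this is the case $M=R$ of the depth formula for flat local homomorphisms, \cite[Proposition~1.2.16]{BH}, so one may simply cite it. If one wishes to reprove it: choose a maximal $R$-regular sequence $x_1,\dots,x_r\in\fm$ with $r=\depth_R R$; since $N$ is $R$-flat this is an $N$-regular sequence, and $N/(x_1,\dots,x_r)N$ is flat over the depth-zero local ring $R/(x_1,\dots,x_r)$ with $(N/(x_1,\dots,x_r)N)\otimes_R k_R=N/\fm N$. This reduces us to $\depth_R R=0$, where one must show $\depth_S N=\depth_S(N/\fm N)=:s$. Lifting a maximal $N/\fm N$-regular sequence of length $s$ from $\fn$ via the local criterion for flatness gives $\depth_S N\ge s$; killing it reduces to $s=0$, where $\fn\in\Ass_S(N/\fm N)$, and the associated-primes formula $\Ass_S N=\bigcup_{\fp\in\Ass_R R}\Ass_S(N\otimes_R\kappa(\fp))$ for $R$-flat $N$, applied with $\fp=\fm\in\Ass_R R$, forces $\fn\in\Ass_S N$, i.e.\ $\depth_S N=0$.

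For (2), write $\bar S:=S/\fm S=S\otimes_R k_R$ and $\bar N:=N/\fm N$, so that $S\otimes_R k_S=\bar S\otimes_{k_R}k_S$ and $N\otimes_R k_S=\bar N\otimes_{k_R}k_S$. Since $k_R\to k_S$ is a finitely generated field extension, $f\colon\bar S\to\bar S\otimes_{k_R}k_S$ is a faithfully flat map of noetherian rings. I would fix $\mathfrak P\in\Supp_{\bar S}\bar N$, a prime $\mathfrak Q$ of $\bar S\otimes_{k_R}k_S$ lying over $\mathfrak P$, and the induced prime $\mathfrak q$ of the fibre ring $\kappa(\mathfrak P)\otimes_{k_R}k_S$. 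The localized map $\bar S_{\mathfrak P}\to(\bar S\otimes_{k_R}k_S)_{\mathfrak Q}$ is flat local with closed fibre $(\kappa(\mathfrak P)\otimes_{k_R}k_S)_{\mathfrak q}$, and $(\bar N\otimes_{k_R}k_S)_{\mathfrak Q}=\bar N_{\mathfrak P}\otimes_{\bar S_{\mathfrak P}}(\bar S\otimes_{k_R}k_S)_{\mathfrak Q}$, so \cite[Proposition~1.2.16]{BH} and the dimension formula for flat local maps give
\begin{align*}
\depth(\bar N\otimes_{k_R}k_S)_{\mathfrak Q}&=\depth\bar N_{\mathfrak P}+\depth\big((\kappa(\mathfrak P)\otimes_{k_R}k_S)_{\mathfrak q}\big),\\
\dim(\bar S\otimes_{k_R}k_S)_{\mathfrak Q}&=\dim\bar S_{\mathfrak P}+\dim\big((\kappa(\mathfrak P)\otimes_{k_R}k_S)_{\mathfrak q}\big).
\end{align*}

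The step I expect to be the main obstacle is showing that the fibre ring $\kappa(\mathfrak P)\otimes_{k_R}k_S$ is Cohen--Macaulay, so that $\depth$ and $\dim$ agree on its localizations. I would prove this by choosing a transcendence basis $t_1,\dots,t_d$ of $k_S/k_R$ and setting $F=k_R(t_1,\dots,t_d)$: then $\kappa(\mathfrak P)\otimes_{k_R}F$ is a localization of the polynomial ring $\kappa(\mathfrak P)[t_1,\dots,t_d]$, hence regular and in particular Cohen--Macaulay, while $\kappa(\mathfrak P)\otimes_{k_R}k_S=(\kappa(\mathfrak P)\otimes_{k_R}F)\otimes_F k_S$ is module-finite and free over $\kappa(\mathfrak P)\otimes_{k_R}F$, hence flat over it with Artinian (so Cohen--Macaulay) fibres, hence Cohen--Macaulay by \cite[Theorem~2.1.7]{BH}. (Routing through $F$ is necessary because $k_S/k_R$ need not be separable, so the fibres of $f$ need not be regular; one obtains, and only needs, Cohen--Macaulayness.) Subtracting the two displayed equalities then shows that $(\bar N\otimes_{k_R}k_S)_{\mathfrak Q}$ is MCM over $(\bar S\otimes_{k_R}k_S)_{\mathfrak Q}$ if and only if $\depth\bar N_{\mathfrak P}=\dim\bar S_{\mathfrak P}$, i.e.\ if and only if $\bar N_{\mathfrak P}$ is MCM over $\bar S_{\mathfrak P}$. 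Finally, faithful flatness of $f$ gives $\Supp(\bar N\otimes_{k_R}k_S)=\pi^{-1}(\Supp\bar N)$ with $\pi\colon\Spec(\bar S\otimes_{k_R}k_S)\to\Spec\bar S$ surjective, so running the above equivalence over all $\mathfrak Q\in\Supp(\bar N\otimes_{k_R}k_S)$ is the same as running it over all $\mathfrak P\in\Supp\bar N$; this is exactly the asserted equivalence between ``$N\otimes_R k_R$ is MCM over $S\otimes_R k_R$'' and ``$N\otimes_R k_S$ is MCM over $S\otimes_R k_S$''. Apart from that one Cohen--Macaulayness input, everything is bookkeeping with the additivity of depth and dimension along flat local maps and with faithful flatness.
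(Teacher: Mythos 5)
Your proof is correct, and in broad outline it follows the same route as the paper: part (1) is cited to \cite[Proposition~1.2.16]{BH}, and part (2) rests on the faithful flatness of $S\otimes_R k_R\to S\otimes_R k_S$ plus additivity of depth and dimension along flat local maps. The noteworthy difference is that the paper's proof of (2) consists of the single remark that $S\otimes_R k_S$ is faithfully flat over $S\otimes_R k_R$ and noetherian, with no further justification. As you correctly sense, faithful flatness alone does not transfer the MCM property in either direction: one also needs the closed fibers $\kappa(\mathfrak P)\otimes_{k_R}k_S$ of this base change to be Cohen--Macaulay, so that the depth and dimension formulas have the same correction term. (A toy counterexample to the bare faithful-flatness claim: $k\to B$ with $B$ a non-CM local $k$-algebra and $M=k$.) Your verification of Cohen--Macaulayness of those fibers---factor through $F=k_R(t_1,\dots,t_d)$ for a transcendence basis of $k_S/k_R$, getting a regular base $\kappa(\mathfrak P)\otimes_{k_R}F$ with finite flat extension and Artinian fibers---is exactly the missing ingredient, and routing through $F$ rather than assuming separability is the right move. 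So your argument is not a different approach so much as a complete version of the paper's one-liner; it supplies the step that the paper takes for granted, and your bookkeeping with $\Supp$ and the surjectivity of $\pi$ to move between ``for all $\mathfrak P$'' and ``for all $\mathfrak Q$'' is also accurate.
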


\begin{proof}
For the first, this is a special case of (\cite{BH}, Proposition 1.2.16).

For the second, it suffices to note that $S \otimes_{R} k_{S} \simeq (S \otimes_{R} k_{R}) \otimes_{k_{R}} k_{S}$ is faithfully flat over $S \otimes_{R} k_{R}$, and $S \otimes_{R} k_{S}$ is noetherian by assumption.
\end{proof}

\begin{Proposition}[Generic principle I]
\label{generic-mcm}
Let $\varphi:R \to S$ be a flat map of finite type of excellent rings. Let $N$ be an $R$-flat finite $S$-module. Assume $R$ admits a dualizing complex. Then the set of all $\fp \in \Spec R$ such that $N \otimes_{R} k(\fp)$ is MCM over $S\otimes_{R} k(\fp)$ forms an open subset of $\Spec R$.
\end{Proposition}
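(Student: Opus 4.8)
The plan is to apply the topological criterion recalled just above: a subset of the noetherian scheme $\Spec R$ is open precisely when it is constructible and stable under generization. Writing
$$
U:=\{\,\fp\in\Spec R \mid N\otimes_{R}k(\fp)\ \text{is MCM over}\ S\otimes_{R}k(\fp)\,\},
$$
the task reduces to showing (i) that $U$ is constructible and (ii) that $U$ is stable under generization.

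For (i) I would run a noetherian induction on $\Spec R$ powered by generic flatness. Since $R$ admits a dualizing complex and $\varphi$ is of finite type, $S$ admits one as well, and hence so does each fibre $S\otimes_{R}k(\fp)$; maximal Cohen-Macaulayness of the finite module $N\otimes_{R}k(\fp)$ over $S\otimes_{R}k(\fp)$ can then be detected by the vanishing, in a prescribed range of degrees, of finitely many $\ext$-modules of $N\otimes_{R}k(\fp)$ taken against a relative dualizing complex. Passing to $R/\fa$ for $\fa$ a minimal prime, I may assume $R$ is a domain; inverting one nonzero element of $R$, I may further assume that $N$ and each of the relevant $\ext$-modules is $R$-flat; over the resulting dense open of $\Spec R$, the usual base-change isomorphisms for $\ext$ (available once everything in sight is $R$-flat) together with Lemma \ref{flat}(2) show that $U$ contains either all of that open or none of it, and the inductive hypothesis applied to the closed complement completes the argument. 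One could instead simply invoke the standard constructibility of Cohen-Macaulay loci for morphisms of finite type.

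For (ii), let $\fp\in U$ and $\fp_{0}\subseteq\fp$. Note first that, $R$ being excellent, its formal fibres are geometrically regular, hence Cohen-Macaulay, so hypothesis (1) of Proposition \ref{localization-mcm} is automatic for every localization of $R$. The scheme for the proof is: localize at $\fp$; replace $S$ by the trivial extension $S*N$ along $\Supp_{S}N$, as in the proof of Proposition \ref{localization-mcm}, so as to convert ``$N$ is MCM'' into ``the ambient ring is Cohen-Macaulay'' over $\Supp_{S}N$; then use Proposition \ref{localization-mcm} (in its form for not-necessarily-local flat maps) to transport maximal Cohen-Macaulayness from the fibre over $\fp$ to the fibre over $\fp_{0}$; finally localize the conclusion at an arbitrary $\fq_{0}\in\Supp_{S}N$ above $\fp_{0}$ to deduce $\fp_{0}\in U$. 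I expect this step to be the main obstacle, for two intertwined reasons. First, to relate a prime $\fq_{0}$ of $S$ over $\fp_{0}$ to a prime of $S$ over $\fp$ one needs a going-up phenomenon, which fails for flat maps in general; it must be extracted from the finite-type hypothesis together with the $R$-flatness of $N$, which forces each irreducible component of $\Supp_{S}N$ to dominate the component of $\Spec R$ beneath it. Second, Proposition \ref{localization-mcm} demands that the closed fibre \emph{ring} be Cohen-Macaulay, whereas here only the \emph{module} $N\otimes_{R}k(\fp)$ is assumed MCM and the fibre ring $S\otimes_{R}k(\fp)$ may genuinely fail to be Cohen-Macaulay away from $\Supp_{S}N$; reconciling this is exactly where the trivial-extension construction along $\Supp_{S}N$, the $R$-flatness of $N$, and the dualizing complex on $R$ --- entering through a local-duality description of maximal Cohen-Macaulayness that is well behaved under base change, just as in the proof of Theorem \ref{localization} --- must all be brought to bear.
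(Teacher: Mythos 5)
Your outline — reduce openness to ``constructible $+$ stable under generization'', pass to the trivial extension $S*N$, and invoke Proposition~\ref{localization-mcm} for the generization step — matches the skeleton of the paper's argument. But two essential ingredients of the paper's proof are missing, and your treatment of the parts you do have is left at the level of intentions rather than a proof.

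First, the dualizing complex enters the paper's proof in exactly one place, and it is not the one you propose: the paper uses Kawasaki's Macaulayfication theorem to construct a proper surjection $f\colon X\to \Spec R$ with $X$ Cohen--Macaulay, then base-changes $\varphi$ along $f$ and pushes the resulting open set back down (using Lemma~\ref{flat}(2) to compare fibres, properness for surjectivity, Chevalley for constructibility of the image, and Proposition~\ref{localization-mcm} again for generization). This reduction to a CM base is what makes the rest of the argument work; your plan never invokes it. Your alternative use of the dualizing complex (an $\ext$-vanishing characterization of MCM over the fibres, behaving well under a generic-flatness stratification) is a plausible line of attack but is only sketched, and it does not obviously deliver the clean statement the paper needs.

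Second, and more consequentially, you do not supply the mechanism the paper actually uses to get constructibility. Once $R$ is assumed CM, Lemma~\ref{flat}(1) converts the \emph{fibrewise} condition ``$N_\fq/\fp N_\fq$ is MCM over $S_\fq/\fp S_\fq$'' into the \emph{absolute} condition ``$(S*N)_\fq$ is a Cohen--Macaulay local ring''; the CM locus of $\Spec(S*N)$ is open because $S*N$ is excellent, and then Chevalley's theorem applied to $\varphi_N^*\colon\Spec(S*N)\to\Spec R$ gives constructibility of the complement of the image of the non-CM locus. This translation from a relative to an absolute condition is the crux, and it is precisely what requires $R$ to be CM --- hence the Macaulayfication. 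Your proposal gestures at ``standard constructibility of Cohen--Macaulay loci for morphisms of finite type'' without pinning down a citation or a proof, and does not notice that the absolute-versus-relative translation is available after the CM reduction. The concerns you raise in (ii) about going-up and about Proposition~\ref{localization-mcm} requiring the fibre \emph{ring} to be CM are real, but they are dissolved rather than confronted once one works with $R\to S*N$ on the CM locus of $\Spec(S*N)$ and applies Proposition~\ref{localization-mcm} with the module taken to be $S*N$ itself; you identify the obstacle but do not resolve it. As written, the proposal is an outline with the two load-bearing steps --- Macaulayfication and the excellence/Chevalley constructibility argument --- absent.
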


\begin{proof}
Suppose that $N \otimes_{R} k(\fp)$ is MCM over $S \otimes_{R} k(\fp)$ for $\fp \in \Spec R$. Then it suffices to find an open subset $\fp \in U \subseteq \Spec R$ such that $N \otimes_{R} k(\fp')$ is MCM over $S \otimes_{R} k(\fp')$ for every $\fp' \in U$.

First, assume that $R$ is CM. Choose any $\fq \in \Spec S$ with $\fp=R \cap \fq$. By Lemma 4.1 and the hypothesis for $R$, we have
$$
\depth_{S_{\fq}}N_{\fq}=\depth_{S_{\fq}}(N_{\fq}/\fp N_{\fq})+\dim R_{\fp}.
$$
Hence $N_{\fq}/\fp N_{\fq}$ is MCM over $S_{\fq}/\fp S_{\fq}$ $\iff$ $N_{\fq}$ is MCM over $S_{\fq}$. Let $S * N$ be the trivial extension. Then the natural map $S \to S*N$ gives a one-one correspondence of prime ideals of respective rings; that is, the image of $\fq \in \Spec S$ under the map $S \to S*N$ is contained in a unique prime ideal $\fq*N$. To ease the notation, we will write $\fq$ for $\fq*N$. It follows that $(S*N)_{\fq}=S_{\fq}*N_{\fq}$,
$$
\depth_{S_{\fq}}N_{\fq}=\depth_{S_{\fq}}(S_{\fq}*N_{\fq})=\depth_{S_{\fq}*N_{\fq}}(S_{\fq}*N_{\fq}),
$$
and thus $N_{\fq}/\fp N_{\fq}$ is MCM over $S_{\fq}/\fp S_{\fq}$ $\iff$ $S_{\fq}*N_{\fq}$ is CM.

Let $U $ be the maximal CM locus of $\Spec(S*N)$. $U$ is a non-empty open subset since $S$ is excellent. Let $W$ be the complement of $U$ in $\Spec(S*N)$ and let $\varphi_{N}:R \to S \to S*N$ be the composite map. Note that the map $\varphi_{N}$ is flat and of finite type. Denote by $\varphi^{*}_{N}$ the associated scheme map. But then the flatness of $\varphi_{N}$ implies that the complement $W'$ of $\varphi^{*}_{N}(W)$ has the property that $W'$ is the maximal subset of $\Spec R$ such that the fiber of $\varphi^{*}_{N}:\Spec(S*N) \to \Spec R$ at every point of $W'$ is CM. Since $\fp \in W'$, it is non-empty. By Chevalley's theorem, $W'$ is constructible. On the other hand, Proposition \ref{localization-mcm} implies that $W'$ is stable under generization. Hence $W'$ is an open set and $\fp \in W'$. Until now, we did not use the assumption that $R$ has a dualizing complex.

Next, assume that $R$ is an arbitrary excellent ring. We may reduce to the CM case as follows. Let $f:X \to \Spec R$ be a Macaulayfication (\cite{Ka1}, Theorem 1.1). Note that $f$ is a dominant and proper map, hence it is surjective. Then we have a base change diagram:
$$
\begin{CD}
X \times_{R} S @>>> X \\
@VVV @VV f V \\
\Spec S @>\varphi^{*}>> \Spec R \\
\end{CD}
$$
(By abuse of notations, $X \times_{R} S$ denotes the fiber product of schemes). Note that since $X$ is of finite type over $R$, we find that $X \times_{R} S \to X$ is an affine flat map of finite type of excellent schemes. Since the issue is local on $X$, we may assume that $X=\Spec R'$, and thus $X \times_{R} S$ is also affine. Let $f(x)=\fp$ and let $M$ denote the pull-back of $N$ under the map $X \times_{R} S \to \Spec S$. Then $M=R' \otimes_{R} N$, which is an $R'$-flat finite $R' \otimes_{R} S$-module.

Then Lemma \ref{flat} implies that $N \otimes_{R} k(\fp)$ is MCM over $S \otimes_{R} k(\fp)$ $\iff$ $M \otimes_{R'} k(x) \simeq N \otimes_{R} k(x)$ is MCM over $(R' \otimes_{R} S) \otimes_{R'} k(x) \simeq S \otimes_{R} k(x)$. Hence we may argue as previously to find a maximal open subset $U \subseteq X$ such that $M \otimes_{R'} k(x)$ is MCM over $(R' \otimes_{R} S) \otimes_{R'} k(x)$ for every $x \in U$. Then Chevalley's theorem together with Proposition \ref{localization-mcm} imply that $f(U)$ is open, which is the desired open set. This completes the proof.
\end{proof}

\begin{Remark}
By his celebrated theorem of Macaulayfication, Kawasaki was able to establish a conjecture of Sharp (\cite{Ka2}, Corollary 1.4) that states that a noetherian ring $R$ of positive dimension admits a dualizing complex if and only if $R$ is a homomorphic image of a finite-dimensional Gorenstein ring. So the above proposition applies to any affine domains over a perfect field, or their localizations.
\end{Remark}

Finally, we are able to prove the following theorem via Proposition 4.2.

\begin{Theorem}[Generic principle II]
\label{generic-f-pure}
Let $\varphi:R \to S$ be a flat map of finite type of $F$-finite rings. Assume that:

\begin{enumerate}
\item[$\mathrm{(1)}$]
all the fibers of $\varphi$ are Gorenstein,

\item[$\mathrm{(2)}$]
$R$ admits a dualizing complex, and

\item[$\mathrm{(3)}$]
$\mathcal{P}$=geometrically $F$-pure.
\end{enumerate}
Then $U_{\varphi}(\mathcal{P})$ forms a Zariski open subset of $\Spec R$.
\end{Theorem}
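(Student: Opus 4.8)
The plan is to reduce Theorem \ref{generic-f-pure} to an openness statement of the kind furnished by Proposition \ref{generic-mcm}, applied to the Radu--Andr\'e cokernels. Write $F_{\fp}:=S\otimes_{R}k(\fp)$ for the fiber of $\varphi$ at $\fp$, and for each $e\ge 1$ set
$$
W_{e}:=\bigl\{\fp\in\Spec R \ \bigm|\ S\otimes_{R}k(\fp)^{1/p^{e}}=F_{\fp}\otimes_{k(\fp)}k(\fp)^{1/p^{e}}\ \text{is $F$-pure}\bigr\}.
$$
Arguing exactly as in the proof of Theorem \ref{localization} --- Lemma \ref{pure} together with (\cite{Fedd}, Proposition 1.3 (6)) disposes of separable base changes, every finite purely inseparable extension of $k(\fp)$ embeds in some $k(\fp)^{1/p^{e}}$, and $F$-purity passes to direct summands --- one sees that $F_{\fp}$ is geometrically $F$-pure over $k(\fp)$ if and only if $\fp\in\bigcap_{e\ge 1}W_{e}$; moreover $W_{1}\supseteq W_{2}\supseteq\cdots$, since $S\otimes_{R}k(\fp)^{1/p^{e}}$ is a pure subring of $S\otimes_{R}k(\fp)^{1/p^{e+1}}$. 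Hence $U_{\varphi}(\mathcal{P})=\bigcap_{e\ge 1}W_{e}$, and it is enough to prove (i) that each $W_{e}$ is open, and (ii) that this descending chain stabilizes.

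For (i): since openness is local on the base and $U_{\varphi}(\mathcal{P})$ lies in the (open) locus of points whose fiber is geometrically reduced --- a geometrically $F$-pure ring is geometrically reduced --- we may assume that $\varphi$ is reduced. Then, by Theorem \ref{Dumi}, for every $e$ the sequence $0\to S\otimes_{R}{^{e}R}\xrightarrow{\,w^{e}_{S/R}\,}{^{e}S}\to N_{e}\to 0$ is exact, where $N_{e}:=\Coker(w^{e}_{S/R})$ is a finite module over $W^{e}_{S/R}=S\otimes_{R}{^{e}R}$ that is flat over ${^{e}R}$. The ring ${^{e}R}$ is $F$-finite, hence excellent, admits a dualizing complex (it is isomorphic to $R$ as a ring), and ${^{e}R}\to S\otimes_{R}{^{e}R}$ is flat of finite type; so Proposition \ref{generic-mcm} shows that the set of $\fp$ for which $N_{e}\otimes_{{^{e}R}}k(\fp)$ is MCM over $(S\otimes_{R}{^{e}R})\otimes_{{^{e}R}}k(\fp)\cong S\otimes_{R}k(\fp)^{1/p^{e}}$ is open in $\Spec({^{e}R})=\Spec R$. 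By hypothesis (1) the ring $S\otimes_{R}k(\fp)^{1/p^{e}}$ is Gorenstein at each of its primes (as noted in the proof of Theorem \ref{localization}, $W^{e}_{\fq}$ is Gorenstein because $S_{\fq}/\fp S_{\fq}$ is). Now run the local-duality argument from the proof of Theorem \ref{localization}, but at an arbitrary $\fq\in\Spec S$: the short exact sequence $0\to W^{e}_{\fq}\xrightarrow{\,\alpha\,}{^{e}(S_{\fq}/\fp S_{\fq})}\to (N_{e})_{\fq}/\fp(N_{e})_{\fq}\to 0$ splits if and only if $\ext^{1}_{W^{e}_{\fq}}\!\bigl((N_{e})_{\fq}/\fp(N_{e})_{\fq},\,W^{e}_{\fq}\bigr)=0$, which by local duality over the Gorenstein local ring $W^{e}_{\fq}$ is equivalent to $(N_{e})_{\fq}/\fp(N_{e})_{\fq}$ being MCM over $W^{e}_{\fq}$; on the other hand the same sequence splits if and only if the $e$-th Frobenius of $W^{e}_{\fq}$ splits, because $\beta\alpha$ is that Frobenius and ${^{e}(S_{\fq}/\fp S_{\fq})}$ is MCM over $W^{e}_{\fq}$ by the regular-system-of-parameters computation given there (using that $S_{\fq}/\fp S_{\fq}$ is Gorenstein). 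Combined with Remark \ref{Remark1}, this identifies the open set above with $W_{e}$, proving (i). In particular $U_{\varphi}(\mathcal{P})=\bigcap_{e}W_{e}$ is automatically stable under generization.

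The remaining statement (ii) is the point I expect to be the main obstacle. Since $\Spec R$ is noetherian, the intersection $\bigcap_{e}W_{e}$ of a descending chain of open sets is constructible --- equivalently, using the generization already established, open --- exactly when the chain stabilizes; so (ii) is equivalent to the constructibility of $U_{\varphi}(\mathcal{P})$, and it reduces to the following assertion about an $F$-finite field $K$: if $A$ is a Gorenstein $K$-algebra of finite type and $A\otimes_{K}K^{1/p}$ is $F$-pure, then $A$ is geometrically $F$-pure over $K$. This is the $F$-pure analogue of the classical fact that a geometric property of a finite-type algebra over an $F$-finite field can be tested after the single base change $K\to K^{1/p}$ (as for geometric regularity in the Radu--Andr\'e circle of ideas); I would prove it by fixing a $p$-basis of $K$ and comparing the splittings of the Frobenius maps along the tower $K^{1/p}\subseteq K^{1/p^{2}}\subseteq\cdots$, using that $A\otimes_{K}K^{1/p}$ is already geometrically reduced over $K$. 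Granting this, $U_{\varphi}(\mathcal{P})=W_{1}$, which is open by (i), completing the proof.
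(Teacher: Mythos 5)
Your outline reproduces the paper's strategy---identify $F$-purity of the Radu--Andr\'e ring $W^{e}_{\fq}$ with an MCM condition on the cokernel of $w^{e}_{S/R}$ and then invoke Proposition~\ref{generic-mcm}---and you have correctly isolated the step that the paper's two-sentence proof does not make explicit. Each $W_{e}$ is open by your part (i), and as you observe $W_{1}\supseteq W_{2}\supseteq\cdots$ with $U_{\varphi}(\mathcal{P})=\bigcap_{e}W_{e}$; the paper simply writes $N=\Coker(w^{e}_{S/R})$ for a fixed $e$ and asserts that the resulting MCM locus is ``equivalently, the locus where the fiber of $\varphi$ is $F$-pure,'' eliding the distinction between $F$-purity of the fiber, $F$-purity of $W^{e}_{\fq}$, and geometric $F$-purity. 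So the concern you raise is well-placed: the stabilization of the chain $(W_{e})_{e}$ is genuinely needed, and the paper leaves it implicit.

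The assertion you leave ungranted in (ii) is true, and it has a proof that stays inside the Radu--Andr\'e formalism of Theorem~\ref{localization} rather than appealing to a $p$-basis. For a reduced $F$-finite algebra $A$ over an $F$-finite field $K$, the morphism $w^{e+1}_{A/K}$ factors as
$$
A\otimes_{K}{^{e+1}K}\xrightarrow{\;w^{1}_{A/K}\otimes_{^{1}K}{^{e+1}K}\;}{^{1}A}\otimes_{^{1}K}{^{e+1}K}\xrightarrow{\;w^{e}_{{}^{1}A/{}^{1}K}\;}{^{e+1}A},
$$
where the first arrow is a base change of $w^{1}_{A/K}$ and the second is, under the Frobenius identification of $({^{1}A},{^{1}K})$ with $(A,K)$, just $w^{e}_{A/K}$. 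Since $A\otimes_{K}K^{1/p^{e}}\cong W^{e}_{A/K}$ is $F$-pure if and only if $w^{e}_{A/K}$ splits as a map of $W^{e}_{A/K}$-modules (this is exactly the $\alpha$--$\beta$ factorization argument in the proof of Theorem~\ref{localization}, which uses only that the inclusion $\beta$ is finite free, hence split), induction on $e$ shows that the splitting of $w^{1}_{A/K}$ forces the splitting of every $w^{e}_{A/K}$: the base-changed first arrow is split because $w^{1}_{A/K}$ is, and the second arrow is split by the inductive hypothesis applied to $({^{1}A},{^{1}K})$. Hence $W_{1}\subseteq W_{e}$ for all $e$, the chain is constant, $U_{\varphi}(\mathcal{P})=W_{1}$, and the theorem follows from your (i). Note that this stabilization step needs neither the Gorenstein hypothesis nor the dualizing complex; those are used only in (i), to translate the splitting condition into MCM via local duality.
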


\begin{proof}
We fix $\fp \in \Spec R$. A key ingredient for the proof already appeared in Theorem \ref{localization}. Let us employ the notations as in Theorem \ref{localization} and let $N=\Coker(w^{e}_{S/R})$. Then for $\fq \in \Spec S$ with $\fp=R \cap \fq$, we find that $N_{\fq}/\fp N_{\fq}$ is MCM over $W^{e}_{\fq}$ $\iff$ The Radu-Andr$\mathrm{\grave{e}}$ ring $W^{e}_{\fq}$ is $F$-pure. Assume that this holds for all $\fq$ with $\fp=R \cap \fq$. Then we have that $N \otimes_{R} k(\fp)$ is MCM over $S \otimes_{R} k(\fp)$, or equivalently, the fiber of $\varphi$ over $\fp$ is $F$-pure. The theorem then follows from Proposition \ref{generic-mcm}.
\end{proof}


\begin{thebibliography}{99}

     \bibitem{An}
        M. Andr$\mathrm{\grave{e}}$,  \emph{Localisation de la
        lissit$\acute{e}$ formelle},  Manuscripta Math. \textbf{13}
        (1974),  297--307.

     \bibitem{AF1}
        L. Avramov and H.-B. Foxby,  \emph{Grothendieck's localization
        problem},  Contemp. Math. \textbf{159} (1994),  1--13.

     \bibitem{AF2}
        L. Avramov and H.-B. Foxby,  \emph{Cohen-Macaulay properties of ring
        homomorphisms},  Adv. Math. \textbf{133} (1998), 54--95.

     \bibitem{BI}
       A. Brezuleanu and C. Ionescu, \emph{On the localization theorems
       and completions of $\mathrm{\mathbf{P}}$-rings},
       Rev. Roumania Math. Pures Appl. \textbf{29}  (1984), 371--380.

     \bibitem{BH}
        W. Bruns and J. Herzog,  \emph{Cohen-Macaulay rings},
        Cambridge University Press. \textbf{39}  (1998).

     \bibitem{BM}
        H. Brenner and P. Monsky, \emph{Tight closure does not
     commute with localization}, arXiv:0710.2913.

     \bibitem{Du}
       T.  Dumitrescu,  \emph{Reducedness, formal smoothness,
       and approximation in characteristic $p$},
       Comm. Algebra  \textbf{23}  (1995), 1787--1795.

     \bibitem{E}
       F. Enescu,  \emph{On the behavior of $F$-rational rings under
       flat base change}, J. Algebra  \textbf{233}  (2000),
       543--566.

     \bibitem{E1}
       F. Enescu, \emph{Local cohomology and F-stablity}, preprint.

     \bibitem{Fedd}
       R. Fedder,  \emph{$F$-purity and rational singularity},
       Trans. Amer. Math. Soc. \textbf{278}, (1983),  461--480.

     \bibitem{FOV}
       H. Flenner, L. O'Carroll, and W. Vogel,  \emph{Joins and Intersections},
       Springer-Verlag, (1999).

     \bibitem{GR}
        O. Gabber and L. Ramero,  \emph{Almost ring theory},
        LMN  \textbf{1800}, Springer-Verlag (2003).

     \bibitem{Gro}
        A. Grothendieck,  \emph{$\acute{E}$l$\acute{e}$ments de
        g$\acute{e}$om$\acute{e}$trie alg$\acute{e}$brique} IV, Publ. Math.
        IHES \textbf{24}  (1964).

     \bibitem{HR}
        M. Hochster and J. Roberts, \emph{The purity of the Frobenius and local         cohomology}, Adv. Math. \textbf{21}, 117-172 (1972).

     \bibitem{Hu}
        C. Huneke,  \emph{Tight closure and its applications}, CBMS Lecture
        Notes in Mathematics,  Vol.\textbf{88}, Amer. Math. Soc.,
        Providence, (1996).

     \bibitem{Ka1}
        T. Kawasaki, \emph{On Macaulayfication of Noetherian schemes},
        Trans. Amer. Math. Soc. \textbf{352} (2000), 2517--2552.

     \bibitem{Ka2}
        T. Kawasaki, \emph{On arithmetic Macaulayfication of
        Noetherian rings},
        Trans. Amer. Math. Soc. \textbf{354} (2002), 123--149.

     \bibitem{Ku}
        E. Kunz, \emph{On Noetherian rings of characteristic $p$},
        Amer. J. Math. \textbf{98} (1976), 999--1013.

     \bibitem{Ni}
        J. Nishimura, \emph{On ideal adic completions of noetherian rings},
        J. Math. Kyoto  Univ. (1981), 153--169.

     \bibitem{Ra}
        N. Radu, \emph{Regularity, flatness and approximation property},
        An. Univ. Bucvre\c sti  \textbf{40} (1991), 77--81.

     \bibitem{Sm}
        K. Smith, \emph{Tight closure of parameter ideals},
        Invent. Math. \textbf{115}  (1994), 41--60.

\end{thebibliography}
\end{document}